\newcommand{\normal}{\color{black}}
\def\namedlabel#1#2{\begingroup
    #2%
    \def\@currentlabel{#2}%
    \label{#1}\endgroup
}
\theoremstyle{plain}
\newtheorem{theorem}{Theorem}[section]
\newtheorem{corollary}[theorem]{Corollary}
\newtheorem{lemma}[theorem]{Lemma}
\newtheorem{proposition}[theorem]{Proposition}
\theoremstyle{definition}
\newtheorem{remark}[theorem]{Remark}
\newtheorem{definition}[theorem]{Definition}
\numberwithin{equation}{section}
\renewcommand\labelenumi{\textup{\alph{enumi})}}
\renewcommand\theenumi\labelenumi
\makeatletter\renewcommand{\p@enumii}{}\makeatother 
\renewcommand{\leq}{\leqslant}
\renewcommand{\geq}{\geqslant}
\newcommand{\cA}{\mathcal{A}}
\newcommand{\R}{\mathds{R}}
\newcommand{\Z}{\mathds{Z}}
\newcommand{\pr}{\mathbf{P}}
\newcommand{\qpr}{\mathds{Q}}
\newcommand{\ex}{\mathbf{E}}
\newcommand{\Dom}{\mathop{\mathrm{Dom}}}
\begin{document}
\title[Lifschitz tail for alloy-type model]
{Lifschitz tail for alloy-type models driven by the fractional Laplacian}

\author[K.~Kaleta]{Kamil Kaleta}
\address[K.~Kaleta]{Faculty of Pure and Applied Mathematics\\ Wroc{\l}aw University of Science and Technology\\ ul. Wybrze{\.z}e Wyspia{\'n}skiego 27, 50-370 Wroc{\l}aw, Poland}
\email{kamil.kaleta@pwr.edu.pl}

\thanks{Research supported by the National Science Center, Poland, grant no.\ 2015/17/B/ST1/01233.}

\author[K.~Pietruska-Pa{\l}uba]{Katarzyna Pietruska-Pa{\l}uba}
\address[K.~Pietruska-Pa{\l}uba]{Institute of Mathematics \\ University of Warsaw
\\ ul. Banacha 2, 02-097 Warszawa, Poland}
\email{kpp@mimuw.edu.pl}

\maketitle

\baselineskip 0.5 cm

\begin{abstract}
We establish precise asymptotics near zero of the integrated density of states for the random Schr\"{o}dinger operators $(-\Delta)^{\alpha/2} + V^{\omega}$ in $L^2(\mathbb R^d)$ for the full range of $\alpha\in(0,2]$ and a fairly large class of random nonnegative alloy-type potentials $V^{\omega}$. The IDS exhibits the Lifschitz tail singularity. We prove the existence of the limit $$\lim_{s\to 0} s^{d/\alpha}\ln\ell([0,s]) = -C \left(\lambda_d^{(\alpha)}\right)^{d/\alpha},$$
with $C \in (0,\infty]$. The constant $C$ is  is finite if and only if the common distribution of the lattice random variables charges $\left\{0\right\}$. In this case, the constant $C$ is expressed explicitly in terms of such a probability. In the limit formula, $\lambda_d^{(\alpha)}$ denotes the Dirichlet ground-state eigenvalue of the operator $(-\Delta)^{\alpha/2}$ in the unit ball in $\mathbb R^d.$

\smallskip

{\bf MSC Subject Classification (2010):} Primary 60G51, 60H25, Secondary 47D08, 47G30

\smallskip

{\bf Keywords:} Stable processes, Random nonlocal Schr\"{o}dinger operator, Alloy-type potential, Integrated density od states, Lifschitz tail
\end{abstract}

\bigskip\bigskip
\section{Introduction}

In this paper we study the random  Schr\"{o}dinger  operator in the $\mathbb R^d-$setting:
\begin{equation}\label{eq:oper-def}
H^\omega=(-\Delta)^{\alpha/2} +V^\omega, \quad \alpha \in (0,2],
\end{equation}
with the potential
\begin{equation}\label{eq:pot-def}
V^\omega (x) = \sum_{\mathbf i\in\Z^d} q_{\mathbf i}(\omega) W(x-\mathbf i),\quad x\in\mathbb R^d,
\end{equation}
where $\{q_{\mathbf i}\}_{\mathbf i\in \Z^d}$ is a sequence of i.i.d.\ nonnegative  and nondegenerate  random variables  over the probability space $(\Omega, \mathcal A, \mathbb Q),$ with cumulative distribution function $F_q(t)=\mathbb Q[q\leq t],$ and $W:\R^d\to\mathbb [0,\infty)$ is a sufficiently regular  nonnegative  \emph{single-site potential}. Such a potential is said to be {\em alloy-type}. For $\alpha \in (0,2)$ the fractional Laplacian operator $-(-\Delta)^{\alpha/2}$ is a non-local operator, and for $\alpha=2$ it becomes the usual Laplacian $\Delta$. We are mainly interested in the study of the asymptotic behavior of {\em the integrated density of states} (IDS) for the operator $H^\omega$ (denoted by $\ell$) at the bottom of its spectrum (the precise definition of $\ell$ is given in Section \ref{sec:existence}).

In the discrete setting - when the Schr\"{o}dinger operator is based on the   discrete
Laplacian on $\ell^2(\mathbb Z^d) -$  such operators were widely studied and the literature of the subject is immense. They are sometimes called Anderson operators and the evolution based on $H^\omega$ -- the Parabolic Anderson Model (PAM). The reader
interested in discrete rather than continuous models may consult,
e.g. the books  \cite{bib:Car-Lac}, \cite{bib:Mol}, \cite{bib:Fig-Pas}, and the more recent monographs \cite{bib:Kon}, \cite{bib:Aiz-War},  together with the literature therein.
 For the particular case of alloy-type potentials, for a survey of results we also refer to the review paper \cite{bib:Elg-Kru-Tau-Ves}.

 The literature of the case of the Laplacian on $L^2(\mathbb R^d)$ disturbed by a random potential is, by now,  also widespread. Of the singular-type potentials, the best-analysed case is that of Poisson potentials:
\begin{align} \label{eq:Poiss}
 V^\omega= \int_{\mathbb R^d} W(x-y)\mu^\omega({\rm d}y),
\end{align}
 where $\mu^\omega$ is a random Poisson point measure in $\mathbb R^d.$  These operators are known to have Lifschitz tails,
 see e.g. \cite{bib:Oku,bib:Szn1,bib:Szn-book}
 which is a strong indication for the localization property in that case, proven later in \cite{bib:Ger-His-Kle}.
 Alloy-type potentials in the continuum were considered e.g. in the celebrated paper by
 Bourgain and Kenig \cite{bib:Bou-Ken}, and earlier also in \cite{bib:Com-His}, \cite{bib:Dam-Sim-Sto}.
As to the Lifschitz tail itself, it has been proven by Kirsch and Simon  in the continuous alloy-type model in \cite{bib:Kir-Sim}. This paper is also the starting point for our considerations  and it motivates the main questions we address in our present contribution.

In the paper \cite{bib:Kir-Sim}, the authors have considered the Schr\"odinger operator $H^\omega=-\Delta +V^\omega$ with $V^\omega$ being a sum of the random lattice potential as in \eqref{eq:pot-def} and a sufficiently regular $\mathbb Z^d-$periodic potential. The assumptions of that paper are as follows:
the support of the distribution of the $q$'s is a compact subset of the positive half-line (but not a single point) and is touches zero, their common distribution function satisfies $F^-(t)\geq Ct^N$ for some constants  $C,N>0,$ and $t$ close to zero,
the single-site potential $W$ is a function satisfying $W(x)=O(|x|^{-d-\epsilon})$ as $|x|\to\infty$ and some further technical integrability conditions.

Under these assumptions, the IDS exists, and the authors of the cited paper prove the Lifschitz tail: when $\epsilon\geq 2,$ then
 at the bottom of the spectrum $\lambda_0,$
\[\lim_{\lambda\searrow \lambda_0} \frac{\ln\{-\ln \ell([0,\lambda]\}}{\ln( \lambda-\lambda_0)}=-\frac{d}{2}.\]
This strong result motivates further important questions about the asymptotic behavior of $\ell([0,\lambda]$ as $\lambda \searrow 0$ and its actual dependence on
the initial data provided by the $q$'s and the single site potential $W$.
 Let us note that the authors of \cite{bib:Kir-Sim} were not able to obtain the existence of the limit
\begin{equation}\label{eq:ks-fail}
\lim_{\lambda\searrow \lambda_0} (\lambda-\lambda_0)^{d/2}\ln \ell([0,\lambda]);
\end{equation} in the proof there were lower order terms distorting the picture. It was even not clear if for the alloy-type models this limit could exists at all.
Interestingly, the existence of the finite limit as in \eqref{eq:ks-fail} would mean that the IDS of this particular alloy-type model manifests the behaviour known from
the models based on the Poissonian type random fields as in \eqref{eq:Poiss} (see e.g. \cite{bib:Oku,bib:Szn1}). Note that these two types of potentials induce two different models of random environments which typically require completely different approaches and methods. All the above questions are addressed in the present paper.

In our framework, we replace the kinetic term $-\Delta$ by the more general operators $(-\Delta)^{\alpha/2},$ for the full range of $\alpha\in(0,2]$. To the best of our knowledge, the use of nonlocal operators for alloy-type models is a novelty. We show
that once $W$ is a bounded, compactly supported and nontrivial single site potential, and the support of the distribution of $q$'s touches zero, then the limit in \eqref{eq:ks-fail} exists. Moreover, we prove that in these settings it is finite if and only if the distribution of $q$'s charges zero, i.e. $F_q(0)= \mathbb Q[q=0]>0$. All of our framework assumptions on $W$ and $q$'s are precisely stated in {\bf(W1)}-{\bf(W2)} and {\bf(Q)} at the beginning of Section \ref{sec:random_set_up}.

 Our main result is as follows.

 {
\begin{theorem}\label{coro-no-atom-at-0_IDS}
Let $\alpha \in (0,2]$ and let $H^{\omega}$ be the Schr\"odinger operator with the lattice potential $V^{\omega}$ as in \eqref{eq:oper-def}-\eqref{eq:pot-def} such that the assumptions {\bf(W1)}-{\bf(W2)} and {\bf(Q)} hold.
We have the following statements.
     \begin{itemize}
        \item[(1)] If the distribution of $q$ has an atom at zero, i.e.\ $ F_q(0)>0,$ then
	                 \[\lim_{\lambda \searrow 0} \lambda^{d/\alpha} \ln \ell[0,\lambda]=	 - \ln\left(\frac{1}{ F_q(0)}\right) \left(\lambda_d^{(\alpha)}\right)^{d/\alpha},\]
									where $\lambda_d^{(\alpha)}$ is the ground state eigenvalue of the operator $(-\Delta)^{\alpha/2}$ constrained to the unit ball in $\R^d$, i.e.\ with Dirichlet conditions outside of this ball \textup{(}for $\alpha \in (0,2)$\textup{)}, and on its boundary \textup{(}for $\alpha=2$\textup{)}.
									
	      \item[(2)] If the distribution of $q$ has no atom at zero, i.e.\ $F_q(0)=0$, then
	                 \[\lim_{\lambda \searrow 0} \lambda^{d/\alpha} \ln \ell[0,\lambda]=	 -\infty.\]
     \end{itemize}
\end{theorem}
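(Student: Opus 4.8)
The plan is to sandwich $\ell[0,\lambda]$ between stretched-exponential bounds of the form $\exp\bigl(-(C\mp\varepsilon)\lambda^{-d/\alpha}\bigr)$ for every $\varepsilon>0$ and all small $\lambda>0$, where $C=\ln(1/F_q(0))\,(\lambda_d^{(\alpha)})^{d/\alpha}$ in case (1) and $C=+\infty$ in case (2). Everything rests on the description of $\ell$ as the thermodynamic limit of normalised eigenvalue counting functions of the box-restricted operators $H^\omega_{\Lambda_n}$, together with Dirichlet--Neumann-type bracketing. Since $(-\Delta)^{\alpha/2}$ is nonlocal for $\alpha<2$, the bracketing has to be phrased at the level of the quadratic forms -- killing the stable process outside the box on the Dirichlet side, and using the reflected form $\iint_{\Lambda\times\Lambda}$ on the other side, which merely drops the (positive) cross-terms between blocks -- and one uses that $\ell$ does not depend on the boundary condition; making these facts quantitative and uniform in the nonlocal range is a first technical layer.

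For the lower bound on $\ell$ (equivalently $\liminf_{\lambda\to0}\lambda^{d/\alpha}\ln\ell[0,\lambda]\ge-C$) I would exhibit a favourable configuration. Tile a large box by translates of a box $Q$ of side $n=n(\lambda)$ that contains a concentric ball $B$ of radius $R(\lambda)$ with $\lambda_1\bigl((-\Delta)^{\alpha/2}|_{B}\bigr)\le\lambda/2$; by scaling of the fractional Laplacian this means $R(\lambda)\sim(\lambda_d^{(\alpha)}/\lambda)^{1/\alpha}$, with the normalisation of $\lambda_d^{(\alpha)}$ as in the statement and the optimal cleared shape being a ball. On the event that $q_{\mathbf i}=0$ for every $\mathbf i\in\Z^d$ within distance $\operatorname{diam}\supp W$ of $B$ -- an event of probability $F_q(0)^{N(\lambda)}$ with $N(\lambda)\sim(\lambda_d^{(\alpha)})^{d/\alpha}\lambda^{-d/\alpha}$ -- the potential $V^\omega$ vanishes on $B$, so $H^\omega_{Q}$ has an eigenvalue $\le\lambda$. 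Dirichlet bracketing and the law of large numbers over the i.i.d.\ blocks give $\ell[0,\lambda]\ge n(\lambda)^{-d}F_q(0)^{N(\lambda)}$, and taking logarithms yields case (1). In case (2) one asks instead that $q_{\mathbf i}\le\delta(\lambda)$ on the enlarged ball, with $\delta(\lambda)\to0$ chosen so that $\|V^\omega\|_{L^\infty(B)}\le\lambda/2$ (possible since $W$ is bounded with compact support); the same computation produces the factor $F_q(\delta(\lambda))^{N(\lambda)}$, and $F_q(\delta(\lambda))\to F_q(0)=0$ drives the normalised logarithm to $-\infty$. Combined with the upper bound this already settles case (2).

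For the upper bound on $\ell$ (equivalently $\limsup_{\lambda\to0}\lambda^{d/\alpha}\ln\ell[0,\lambda]\le-C$) I would, via reflected-form bracketing and the Weyl bound $N^\omega_{\Lambda_n}(\lambda)=O(1)$ valid for $n\sim\lambda^{-1/\alpha}$, reduce to $\ell[0,\lambda]\le C_1\,\mathbb{Q}\bigl(\lambda_1(H^\omega_{\Lambda_{n(\lambda)}})\le\lambda\bigr)$. The analytic core is then a deterministic ``a clearing is unavoidable'' lemma: there is $\kappa>0$ and, for each $\varepsilon>0$, a radius $r_\varepsilon$, such that $\lambda_1(H^\omega_{\Lambda_n})\le\lambda$ with $\lambda$ small forces the $\varepsilon$-charged sites $\{\mathbf i:q_{\mathbf i}>\varepsilon\}$ to avoid every sub-ball of radius $r_\varepsilon$ contained in some ball $B$ of radius $\sim\lambda^{-1/\alpha}$, i.e.\ the $\varepsilon$-charge density inside $B$ lies below a threshold. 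This is an uncertainty-principle/uniform-positivity statement for $(-\Delta)^{\alpha/2}$ perturbed by a potential bounded below on a relatively dense set. A large-deviation estimate then gives $\mathbb{Q}(\varepsilon\text{-charge density in }B\le\text{thr})\le\exp\bigl(|B|\,(\,o_{\mathrm{thr}}(1)+\ln F_q(\varepsilon))\bigr)$, so with $|B|\sim(\lambda_d^{(\alpha)})^{d/\alpha}\lambda^{-d/\alpha}$ one obtains $\limsup_{\lambda\to0}\lambda^{d/\alpha}\ln\ell[0,\lambda]\le(\lambda_d^{(\alpha)})^{d/\alpha}\ln F_q(\varepsilon)$; letting $\varepsilon\downarrow0$ closes case (1) and yields $-\infty$ in case (2). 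A cleaner alternative for this half is to apply an exponential (de Bruijn) Tauberian theorem to $\int e^{-\lambda t}\,d\ell(\lambda)$, which by Feynman--Kac equals the $\omega$-averaged trace density of the killed stable semigroup; its large-$t$ rate $\sim t^{d/(d+\alpha)}$ stems from a Donsker--Varadhan-type variational problem optimised by a ball-shaped clearing, and the Legendre-type transform in the Tauberian theorem converts it into $\lambda^{-d/\alpha}$ with the stated constant.

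The main obstacle is the sharp upper bound. One must (i) make the bracketing quantitative for the nonlocal operator with boxes and enlargements whose sizes diverge as $\lambda\to0$, and (ii) prove the ``clearing is unavoidable'' lemma with a radius $R(\lambda)$ that is asymptotically \emph{optimal}, so that no constant is lost when the shape of the clearing is taken into account (a ball, which is precisely where $\lambda_d^{(\alpha)}$ enters). Matching the constant forces one to track how the auxiliary thresholds -- $\varepsilon$ for the charge, the coarse-graining scale, the box enlargements -- interact with the diverging length scale $\lambda^{-1/\alpha}$, and to let them vanish in the correct order; controlling these nested limits, rather than any single estimate, is where the real difficulty lies.
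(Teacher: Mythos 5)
Your overall architecture differs from the paper's: you work directly with eigenvalue counting functions and Dirichlet/``reflected-form'' bracketing, whereas the paper works exclusively with the Laplace transform $L(t)=\int e^{-t\lambda}\,\ell({\rm d}\lambda)$, proves two-sided asymptotics $\ln L(t)\sim -C\,t^{d/(d+\alpha)}$ (Theorems \ref{th:upper-stable} and \ref{th:dolne}), and obtains the statement in one line from Fukushima's exponential Tauberian theorem --- the route you relegate to a ``cleaner alternative''. Your lower bound is fine and is in substance the paper's: the event that all $q_{\mathbf i}$ vanish on an enlarged ball of radius $(\lambda_d^{(\alpha)}/\lambda)^{1/\alpha}$ (resp.\ that $q_{\mathbf i}\le\delta(\lambda)$ in case (2)), combined with scaling of the principal eigenvalue. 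The problem is your upper bound.

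The ``clearing is unavoidable'' lemma, which you correctly identify as the analytic core, is asserted but not proved, and in the form you state it cannot deliver the sharp constant. If the conclusion is only that the $\varepsilon$-charged sites in a ball $B$ of radius $\sim\lambda^{-1/\alpha}$ have density below some threshold $\theta>0$, then the binomial large-deviation bound gives $\mathbb{Q}(\text{density}\le\theta)\approx\exp\bigl(|B|\,[(1-\theta)\ln F_q(\varepsilon)+H(\theta)]\bigr)$ with $H$ the entropy function, and you recover only the fraction $(1-\theta)$ of the constant $\ln F_q(\varepsilon)$ per site; your claimed estimate $\exp\bigl(|B|(o_{\rm thr}(1)+\ln F_q(\varepsilon))\bigr)$ implicitly requires $\theta\to 0$, i.e.\ that a low ground-state eigenvalue forces an \emph{essentially totally empty} ball of \emph{asymptotically optimal} radius and shape. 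Proving that --- simultaneously controlling sparse but nonempty configurations of charged sites, the shape optimization (Faber--Krahn), and the nonlocal tunnelling of the stable process past obstacles --- is exactly what the paper's machinery is built for: periodization of the $q$'s (Lemma \ref{lem:periodic}), reduction to a torus, replacement of $q_{\mathbf i}$ by Bernoulli variables at level $\kappa$, scaling, Sznitman's enlargement-of-obstacles comparison (Theorem \ref{th:compare}) to pass from the potential to a hard-obstacle problem with only `good' obstacles enlarged, a union bound over the $2^{O((Km/b)^d)}$ coarse-grained configurations, and finally the Donsker--Varadhan variational formula $\inf_U[\lambda_1(U)+\nu|U|]=C_{d,\alpha}\nu^{\alpha/(d+\alpha)}$ (Lemma \ref{lem:sup}) to identify the constant. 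None of this is replaced by an argument in your proposal; naming the nested limits is not the same as controlling them. Until the deterministic lemma is proved with threshold $\theta=o(1)$ at the optimal radius, the upper bound --- and hence case (1) and half of case (2) --- remains open in your write-up.
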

	}
Before we discuss the set-up for our research, it is useful to give some interpretations of the above result. It shows that in general the two scenarios are possible.
If $F_q(0)>0$ (i.e.\ $q$ takes the value $0$ with positive probability $F_q(0)$), then the lattice system manifests the behavior known from the Poissonian settings -- this is the case (1). The constant $\ln\left(\frac{1}{ F_q(0)}\right)$ is a lattice counterpart of the intensity of the Poisson cloud of points in the Poisson model (cf. e.g. \cite[eq. (1.1)]{bib:Nak} for $\alpha=2$ and \cite[eq (1.8)]{bib:Oku} for $\alpha \in (0,2]$). Observe that our result in (1) says that
$$
\ell[0,\lambda] =	 F_q(0)^{\left|B(r_{\alpha}(\lambda))\right|(1+o(1))}, \quad \text{as} \ \ \lambda \searrow 0,
$$
where $r_{\alpha}(\lambda):= (\lambda_d^{(\alpha)}/\lambda)^{1/\alpha}$.
Following Sznitman (cf. \cite[Remark\ 3.6(1)]{bib:Szn1}), it can be interpreted as follows. Since $F_q(0)$ is the probability of the $q_{\mathbf i}$ being zero at any given lattice point $\mathbf i,$  $\ell[0,\lambda]$
behaves roughly as the probability that  in the ball with ground state eigenvalue equal  to $\lambda,$ all the $q_{\mathbf i}$'s are zero -- meaning that the potential comes only from lattice point outside of this ball.

 On the other hand, when $F_q(0)=0$, then (2) shows that the behaviour of $\ell[0,\lambda]$ is different, placing the system in another scenario. In this regime the actual decay properties of $F_q(t)$ as $t \searrow 0$ affect the asymptotic behaviour of IDS at zero, giving  corrections to the rate $\lambda^{d/\alpha}$. This regime is analyzed in detail in a companion paper \cite{bib:KK-KPP-alloy2}, in which we study this problem for a substantially wider class of non-local random Schr\"odinger operators and more general single site potentials $W$. This further research was strongly motivated by the dichotomy that we obtained in our Theorem \ref{coro-no-atom-at-0_IDS} presented above.

Let us now say a few words about our proofs. Our methods are mainly probabilistic and they rely on an application of the Feynman-Kac
representations of the evolution (heat) semigroups considered. At the probabilistic side, the Laplacian is the generator of the Brownian motion. Since we replace this operator by  more general operators $-(-\Delta)^{\alpha/2},$ $\alpha\in(0,2]$, we need to study the full range of the isotropic $\alpha$-stable processes which give rise to these operators. For $\alpha \in (0,2)$ such processes are pure jump L\'evy processes, and the Brownian motion is the only isotropic stable process with continuous paths (local vs. nonlocal nature of their generators).

In our approach, we work with Laplace transform $L$ of $\ell$ and prove the long-time asymptotics of $L,$ which we then transform to the Lifschitz tail asymptotics by means of an exponential-type Tauberian theorem of \cite{bib:F} (Section \ref{sec:asymp}). Such an approach was successfully used for
the  Poissonian potentials: for the  Brownian motion  in $\mathbb R^d$
\cite{bib:Szn1} and on fractals \cite{bib:KPP1}, for L\'{e}vy processes in $\mathbb R^d$ \cite{bib:Oku}, for subordinate Brownian
motions on fractals \cite{bib:KK-KPP2}. The Laplace transform we study is closely related to the trace of Feynman-Kac semigroups, which are
expressed and analyzed mostly probabilistically. The lower bound (Theorem \ref{th:dolne}) is proven directly. The proof of the upper bound  (Theorem \ref{th:upper-stable}) is the most demanding part of the paper. It consists of several key steps. For translation-invariant lattice potentials,  for the Laplace transform of the IDS is given explicitly as the Feynman-Kac integral of the
$\alpha-$stable process with the potential given by \eqref{eq:pot-def}. As the first step, we periodize the lattice random variables $q.$ - with given $M>0,$ we repeat the realization of the $q_{\mathbf i}$'s from $[0,M)^d$ over entire $\mathbb Z^d.$ Next, we replace the $\alpha-$stable process on $\mathbb R^d$ with the process
on the torus of size $M,$ denoted $\mathcal T_M,$  and the random variables $q_{\mathbf i}$ with certain Bernoulli variables. All these operations can only increase the Feynman-Kac functional. In the next step we make use of the scaling properties of $\alpha-$stable processes -- we consider tori of size $M=mK$ and shrink the state-space with $K.$ The resulting functional represents the trace of the semigroup of the stable process on $\mathcal T_m,$ affected by the appropriately rescaled potential. The decay of the trace is asymptotically governed by the
principal eigenvalue of this semigroup.
To estimate this eigenvalue, we employ the coarse-graining  method  in the form of `enlargement of obstacles' from \cite{bib:Szn1} or rather its nonlocal version of \cite{bib:KK-KPP2} which permits to manage the possibly intricate configuration of the lattice random variables in the torus $\mathcal T_m.$  As the last step, we identify the constant resulting in the limit. Let us note that we were able to use, in the lattice case, the Sznitman's method, devised to work for Poisson random potentials. This is a new approach, even for the Brownian motion.

The paper is organized as follows. In Section \ref{sec:prel} we present the basic facts on $\alpha-$stable processes and the corresponding semogroups, together with a short discussion of random Schr\"{o}dinger operators. In Section \ref{sec:existence} we define the integrated density and  give an explicit formula  which will be important in further sections. Section \ref{sec:asymp} is devoted to a detailed discussion of our main results:
the upper bound in Section \ref{sec:upper} and then the matching lower bound in Section \ref{sec:lower}. Finally, in Sections \ref{sec:upper-proof} and \ref{sec:lower-proof} we give the proofs.

\section{Preliminaries}\label{sec:prel}
\subsection{The isotropic stable process}\label{sec:process}

Our approach in this paper is in large part probabilistic -- we study the evolution semigroups of the operators $H^{\omega}$ and $H^{\omega}_\Lambda$ through the Feynman--Kac formula. Therefore we first give the probabilistic background. Let $(Z_t)_{t \geq 0}$ be the isotropic $\alpha-$stable process on $\mathbb R^d$, $\alpha \in (0,2]$, $d \geq 1$, i.e. the L\'evy process determined by $\ex e^{i \xi \cdot Z_t} = e^{-t|\xi|^{\alpha}}$, $\xi \in \R^d$, $t>0$ \cite{bib:Sat}. For $\alpha = 2$ the process $(Z_t)_{t \geq 0}$ is the standard Brownian motion running at double speed, i.e. the diffusion process such that $\pr (Z_t \in {\rm d}z) = p(t,z) {\rm d}z$ with $p(t,z)=(4\pi t)^{-d/2} e^{-|z|^2/(4t)}$ being the Gauss kernel.
For $\alpha \in (0,2)$ it is a pure jump L\'evy process with c\`adl\`ag paths whose L\'evy--Khintchine exponent is given by
$$
|\xi|^{\alpha} = \int_{\R^d \setminus \left\{0\right\}} \big(1-\cos( \xi \cdot z)\big) \nu({\rm d}z),
$$
where $\nu({\rm d}z) = \nu(z){\rm d}z = \normal \cA_{d,-\alpha} |z|^{-d-\alpha} {\rm d}z$, $\cA_{d,\gamma} = \Gamma((d-\gamma)/2)/(2^{\gamma}\pi^{d/2}|\Gamma(\gamma/2)|)$, is the L\'evy measure (the jump intensity).  In the cases when we need to specify the `start' and `end' points of the jumps, we will also use the notation $\nu(x,y)=\nu(x-y).$ In the pure jump case, we also have $\pr(Z_t \in {\rm d}z) = p(t,z){\rm d}z$; the density $p(t,z)$ is a continuous function on $(0,\infty) \times \R^d$ which enjoys the two-sided sharp estimates $$p(t,z) \asymp t^{-d/\alpha} \wedge t|z|^{-d-\alpha}, \quad z \in \R^d, \quad t>0.$$ In either case, $(Z_t)_{t \geq 0}$ is a strong Feller process having the scaling property
$\pr(Z_t \in {\rm d}z) = \pr(rZ_{r^{-\alpha} t} \in {\rm d}z)$. Throughout the paper, by $\pr_x$ we denote the probability measure for the process starting from $x \in \R^d$, i.e. $\pr_x(Z_t \in {\rm d}z) = \pr(Z_t+x \in {\rm d}z)$, and by $\ex_x$, the corresponding expected value.  In particular, $\pr_x(Z_t \in {\rm d}z) = p(t,x,z){\rm d}z$, where $p(t,x,z):= p(t,z-x)$. Then the operator $-(-\Delta)^{\alpha/2}$ is the infinitesimal generator of the process $(Z_t)_{t \geq 0},$ see e.g. \cite{bib:Kwa}.

Below we will also use the bridge measure of the stable process starting from $x$ and conditioned to have $Z_t=y,$ $\mathbf P_x-$almost surely.
More precisely, for fixed $t>0$ and $x,y \in \R^d,$ the bridge measure $\mathbf P_{x,y}^{t}$ is defined by the following property:
for any $0 <s<t$ and $A\in \sigma(Z_u: u \leq s),$
\begin{equation}\label{eq:bridge}
\pr_{x,y}^{t}[A] = \frac{1}{p(t,x,y)}\mathbf E_x[\mathbf 1_A p(t-s,Z_s,y)]
\end{equation}
and then extended to $s=t$ by weak continuity.
For more detailed information on Markovian bridges we refer to \cite{bib:Cha-Uri}.

\subsection{Random Schr\"{o}dinger operators and semigroups} \label{sec:random_set_up}

Let $V^{\omega}$ be the random potential defined in \eqref {eq:pot-def}, with the single-site potential $W:\mathbb R^d\to\mathbb [0,\infty)$ (also called the \emph{potential profile}). Throughout the paper we assume that $W$ satisfies the following two conditions:
\begin{itemize}
\item[\bf (W1)] $W$ is bounded and of bounded support: there exists $a>0$ such that $\mbox{supp}\, W\subset B(0,a);$
\item[\bf (W2)] there exist $0<a_0\leq a$ and $b>0$ such that  $W(x)\geq b$ for $x\in B(0, a_0).$
\end{itemize}

\noindent
We also assume that
\begin{itemize}
\item[\bf (Q)] $q_{\mathbf i}$ are nonnegative, nondegenerate, i.i.d. random variables, and $F_q(\kappa)>0$ for any $\kappa>0.$
\end{itemize}

It follows from {\bf (W1)} that the potential $V^{\omega}$ is well-defined,  nonnegative and bounded. This allows us to define the Schr\"odinger operator $H^\omega=(-\Delta)^{\alpha/2} +V^\omega$ as a self-adjoint, positive operator on $\Dom(H^{\omega}) = \Dom\big((-\Delta)^{\alpha/2}\big):= \left\{f \in L^2(\R^d,{\rm d}\xi): |\xi|^{\alpha} \widehat f \in L^2(\R^d,{\rm d}\xi)\right\}$. In particular, the evolution semigroup of the operator $H^{\omega}$ has a probabilistic representation with respect to the process $(Z_t)_{t \geq 0},$ given by the Feynman--Kac formula:
$$
e^{-tH^{\omega}} f(x) = T_t^{V^{\omega}} f(x):= \ex_x\left[{\rm e}^{-\int_0^t V^{\omega}(Z_s) {\rm d}s} f(Z_t)\right], \quad f \in L^2(\R^d, {\rm d}x), \quad t>0.
$$
 Our standard reference for Schr\"odinger operators based on generators of L\'evy processes is the monograph \cite{bib:DC} by Demuth and van Casteren.  Denote by $H^{\omega}_\Lambda$  the operator $H^{\omega}$ constrained to a bounded, nonempty region $\Lambda\subset \R^d$ (we consider Dirichlet conditions on $\Lambda^c$ when $\alpha\in(0,2)$ and on $\partial \Lambda$ when $\alpha=2$) and let ${\rm e}^{-tH^{\omega}_{\Lambda}}, t\geq 0,$ be its evolution (heat) semigroup on $L^2(\Lambda, {\rm d}x)$. Similarly as above, we have:
$$
{\rm e}^{-tH^{\omega}_{\Lambda}} = T_t^{V^{\omega},\Lambda} f(x):= \ex_x\left[{\rm e}^{-\int_0^t V^{\omega}(Z_s) {\rm d}s} f(Z_t); t<\tau_{\Lambda}\right], \quad f \in L^2(\Lambda, {\rm d}x), \quad t>0.
$$
Here $\tau_{\Lambda}:=\inf\{t\geq 0: \, Z_t\notin \Lambda\}$ denotes the first exit time of the process from the domain $\Lambda$. The semigroup operators $T_t^{V^{\omega},\Lambda}$, $t>0$, are integral operators, i.e. there exist measurable, symmetric and bounded kernels $p^{V^{\omega}}_{\Lambda}(t,x,y)$ such that
\begin{align}
\label{def:sem-dir-kernel}
T_t^{V^{\omega},\Lambda} f(x) = \int_{\Lambda} p^{V^{\omega}}_{\Lambda}(t,x,y) f(y){\rm d}y, \quad f \in L^2(\Lambda,{\rm d}x),  \quad t>0;
\end{align}
these kernels are given by the formula
\begin{align}
\label{def:sem-dir-kernel-bridge}
p^{V^{\omega}}_{\Lambda}(t,x,y) = p(t,x,y) \ \ex^t_{x,y}\left[{\rm e}^{-\int_0^t V^{\omega}(Z_s) {\rm d}s} ; t<\tau_{\Lambda}\right],\quad x,y \in \Lambda, \quad t>0,
\end{align}
where $p(t,x,y)$ is the transition density of the isotropic $\alpha-$stable process, and $\ex_{x,y}^t$ is the expectation with respect to the stable bridge measure $\pr_{x,y}^t$ .

Since $|\Lambda|<\infty$ and the kernels $p^{V^{\omega}}_{\Lambda}(t,x,y)$ are bounded, all  the operators $T_t^{V^{\omega},\Lambda}$, $t>0$, are Hilbert-Schmidt. In particular, there exists a complete orthonormal set $\left\{\varphi_k^{V^{\omega},\Lambda}\right\}_{k=1}^{\infty}$ in $L^2(\Lambda,{\rm d}x)$, consisting of eigenfunctions of the operator $H^{\omega}_{\Lambda}$. The corresponding eigenvalues satisfy $0\leq \lambda_1^{V^{\omega}}(\Lambda) < \lambda_2^{V^{\omega}}(\Lambda)\leq \lambda_3^{V^{\omega}}(\Lambda) \leq \ldots \to \infty$; each $\lambda_k^{V^{\omega}}(\Lambda)$ is of finite multiplicity and the ground state eigenvalue $\lambda_1^{V^{\omega}}(\Lambda)$ is simple.

When $V^{\omega} \equiv 0,$ then we just simply write $T_t^{\Lambda}$, $p_{\Lambda}(t,x,y)$ and
$\lambda_k(\Lambda)$ etc.

\section{Existence of the density of states} \label{sec:existence}
As the potential \eqref{eq:pot-def} is stationary with respect to
$\Z^d,$ the existence of the density of states follows from
general theory.
 More precisely,
for a given domain $\Lambda\subset \R^d$ such that $0<|\Lambda|<\infty,$    let
\[\ell_{\Lambda}^{\omega}(\cdot)=\frac{1}{|\Lambda|}\sum_{k=1}^\infty\delta_{\lambda_k^{V^{\omega}}(\Lambda)}
(\cdot)\]
be the counting measure on the spectrum of $H_{\Lambda}^{\omega},$ normalized by the volume.
Due to stationarity properties of $V^{\omega},$ we restrict our attention to sets $\Lambda$ composed of  unit cubes with  vertices in $\Z^d.$
 From the maximal ergodic theorem
(see \cite[Remark VI.1.2]{bib:Car-Lac}) we get that the measures
$\ell_{\Lambda}^{\omega}$ converge vaguely, as $\Lambda \nearrow \R^d,$ to a nonrandom measure $\ell,$ which is
the density of states of $H^\omega.$ The  vague convergence of $\ell_{\Lambda}^{\omega}$ when $\Lambda \nearrow \R^d$ amounts to the convergence of their Laplace transforms
\begin{eqnarray*}
L_\Lambda^\omega(t)&=&\frac{1}{|\Lambda|} \int_{[0,\infty)}{\rm e}^{-t\lambda}\ell_{\Lambda}^{\omega}({\rm d}\lambda)= \frac{1}{|\Lambda|}{\rm Tr}\, T_t^{V^{\omega},\Lambda}
=
 \frac{1}{|\Lambda|}\int_\Lambda p^{V^{\omega}}_{\Lambda}(t,x,x) {\rm d}x
\\
&=&
 \frac{p(t,0)}{|\Lambda|}\int_\Lambda \ex_{x,x}^t\left[{\rm e}^{-\int_0^t V^\omega(Z_s){\rm d}s}; t < \tau_\Lambda \right] {\rm d}x
 \end{eqnarray*}
 for any fixed $t>0.$
 Moreover, we have that the limit $L(t)$ is the Laplace transform of
 $\ell,$ and for any $t>0$
\begin{equation}\label{eq:el-almost-everywhere}
L(t) = \lim_{\Lambda \nearrow \R^d} \mathbb E^{\mathbb Q}L_\Lambda^\omega(t).
\end{equation}

We can actually write down an expression for $L.$
\begin{proposition} Let $L(t)$ be the Laplace transform of the integrated density of states, $\ell.$ Then
\begin{equation}\label{eq:el-expression}
L(t)= p(t,0,0)\int_{[0,1)^d} \mathbb E^{\mathbb Q}\mathbf E_{x,x}^t\left[{\rm e}^{-\int_0^t V^\omega(X_s){\rm d}s}\right] {\rm d}x.
\end{equation}
\end{proposition}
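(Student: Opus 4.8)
The plan is to pass to the limit in \eqref{eq:el-almost-everywhere} along the cofinal family of cubes $\Lambda_n := (-n,n)^d$ (which agree, up to a Lebesgue-null set, with finite unions of unit cubes with vertices in $\Z^d$), writing $\Lambda_n = \bigcup_{\mathbf j\in J_n}(\mathbf j+Q)$ with $Q := [0,1)^d$ and $|J_n| = |\Lambda_n|$. I would separate the argument into an algebraic periodization identity and an analytic step that removes the Dirichlet condition. For the first step, consider the whole-space Feynman--Kac kernel $p^{V^\omega}(t,x,x) = p(t,0,0)\,\mathbf{E}_{x,x}^t\big[{\rm e}^{-\int_0^t V^\omega(Z_s){\rm d}s}\big]$ (the $\Lambda=\R^d$ analogue of \eqref{def:sem-dir-kernel-bridge}). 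For $\mathbf j\in\Z^d$, the spatial homogeneity of $(Z_s)_{s\ge0}$ — under which, by \eqref{eq:bridge}, $\pr^t_{z+\mathbf j,z+\mathbf j}$ is the pushforward of $\pr^t_{z,z}$ by the shift $Z\mapsto Z+\mathbf j$ — together with the distributional invariance $V^\omega(\cdot+\mathbf j)\stackrel{d}{=}V^\omega(\cdot)$ as random fields on $\R^d$ (which holds because $(q_{\mathbf i})_{\mathbf i\in\Z^d}$ is i.i.d.), give
\[
\mathbb E^{\mathbb Q}\mathbf{E}_{z+\mathbf j,z+\mathbf j}^t\!\left[{\rm e}^{-\int_0^t V^\omega(Z_s){\rm d}s}\right]
= \mathbb E^{\mathbb Q}\mathbf{E}_{z,z}^t\!\left[{\rm e}^{-\int_0^t V^\omega(Z_s+\mathbf j){\rm d}s}\right]
= \mathbb E^{\mathbb Q}\mathbf{E}_{z,z}^t\!\left[{\rm e}^{-\int_0^t V^\omega(Z_s){\rm d}s}\right],
\]
i.e.\ $x\mapsto \mathbb E^{\mathbb Q}\mathbf{E}_{x,x}^t[{\rm e}^{-\int_0^t V^\omega(Z_s){\rm d}s}]$ is $\Z^d$-periodic (all interchanges of $\mathbb E^{\mathbb Q}$, $\int{\rm d}x$ and the sum over $J_n$ are harmless since the integrand lies in $[0,1]$). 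Summing over $\mathbf j\in J_n$ and dividing by $|\Lambda_n|=|J_n|$ shows that, for \emph{every} $n$, $\ \mathbb E^{\mathbb Q}\frac{1}{|\Lambda_n|}\int_{\Lambda_n}p^{V^\omega}(t,x,x)\,{\rm d}x\ $ already equals the right-hand side of \eqref{eq:el-expression} (recalling $p(t,0,0)=p(t,0)$).

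It then remains, by \eqref{eq:el-almost-everywhere}, to show that replacing $p^{V^\omega}$ by the Dirichlet kernel $p^{V^\omega}_{\Lambda_n}$ costs nothing in the limit, i.e.\ $\frac{1}{|\Lambda_n|}\int_{\Lambda_n}\big(p^{V^\omega}(t,x,x)-p^{V^\omega}_{\Lambda_n}(t,x,x)\big)\,{\rm d}x\to0$ as $n\to\infty$ (the integrand is nonnegative and $\le p(t,0,0)$, so $\mathbb E^{\mathbb Q}$ may be inserted by dominated convergence). I would apply the strong Markov property of $(Z_s)$ at the exit time $\tau_{\Lambda_n}$ to the Feynman--Kac kernel: using $V^\omega\ge0$, $p^{V^\omega}\le p$, and the fact that $Z_{\tau_{\Lambda_n}}$ lies at distance at least $\dist(x,\Lambda_n^c)$ from $x$,
\[
0\le p^{V^\omega}(t,x,x)-p^{V^\omega}_{\Lambda_n}(t,x,x)
=\mathbf E_x\!\left[{\rm e}^{-\int_0^{\tau_{\Lambda_n}}V^\omega(Z_s){\rm d}s}\,p^{V^\omega}\!\big(t-\tau_{\Lambda_n},Z_{\tau_{\Lambda_n}},x\big);\,\tau_{\Lambda_n}<t\right]
\le\delta\big(\dist(x,\Lambda_n^c)\big),
\]
where $\delta(R):=\sup_{0<s\le t}\sup_{|w|\ge R}p(s,w)$. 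The sharp bounds recalled in Section \ref{sec:process} — $p(s,w)\asymp s^{-d/\alpha}\wedge s|w|^{-d-\alpha}$ for $\alpha\in(0,2)$, and the Gaussian bound for $\alpha=2$ — give $\delta(R)\le C(t,d,\alpha)R^{-d}\to0$ for $R$ large. Splitting $\Lambda_n$ into $\{\dist(\cdot,\Lambda_n^c)>R_n\}$ and a boundary layer of volume $O(R_n n^{d-1})$, and choosing $R_n\to\infty$ with $R_n=o(n)$, bounds the average above by $\delta(R_n)+p(t,0,0)\cdot O(R_n/n)\to0$; combined with the previous paragraph and \eqref{eq:el-almost-everywhere} this yields \eqref{eq:el-expression}.

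The main obstacle is precisely this last step: the bound $p^{V^\omega}(t,x,x)-p^{V^\omega}_{\Lambda_n}(t,x,x)\le\delta(\dist(x,\Lambda_n^c))$ with $\delta(R)\downarrow0$; everything else is bookkeeping. For $\alpha=2$ path continuity makes the exit argument transparent, but for $\alpha\in(0,2)$ the process can leave $\Lambda_n$ by a long jump, so one cannot argue via continuity — the decay must instead be read off uniformly from the global heat-kernel estimates, with a constant depending only on the \emph{fixed} parameters $t,d,\alpha$. A minor technical point is to check that the supremum defining $\delta(R)$, taken over $s\in(0,t]$, stays finite and small: for fixed $|w|\ge R$ the map $s\mapsto p(s,w)$ attains its maximum around $s\asymp|w|^{\alpha}\wedge t$ with value $\lesssim|w|^{-d}$ (resp.\ $\asymp R^{-d}$ in the Gaussian case), which gives the claimed decay. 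The remaining interchanges of $\mathbb E^{\mathbb Q}$, $\int_{\R^d}{\rm d}x$ and the finite sums are justified throughout by nonnegativity and the uniform bound by $1$.
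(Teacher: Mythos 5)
Your argument is correct and follows the same overall route as the paper: the same decomposition of $\mathbb E^{\mathbb Q}L_{\Lambda}^{\omega}(t)$ into a whole-space Feynman--Kac term (identified with the right-hand side of \eqref{eq:el-expression} via $\Z^d$-stationarity of $\omega\mapsto V^{\omega}$ and translation invariance of the bridge) plus an error supported on paths that leave $\Lambda$ before time $t$. The only point where you genuinely diverge is the error estimate. The paper stays with the bridge measure and bounds $\pr^t_{x,x}[\tau_{\Lambda_m}\le t]$ by the maximal-function tail $\pr^t_{0,0}[\sup_{s\le t}|Z_s|\ge\sqrt m]$ on the bulk $[\sqrt m,m-\sqrt m)^d$, plus the relative volume of the boundary layer; you instead apply the strong Markov property at $\tau_{\Lambda_n}$ to the Feynman--Kac kernels and read the decay off the off-diagonal heat-kernel bounds, obtaining the quantitative estimate $p^{V^\omega}(t,x,x)-p^{V^\omega}_{\Lambda_n}(t,x,x)\le \delta(\dist(x,\Lambda_n^c))$ with $\delta(R)\lesssim R^{-d}$. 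Your version has the advantage of not requiring any a priori control of the bridge's running supremum (which for $\alpha\in(0,2)$ is not entirely免 of work), at the cost of invoking the first-exit (Hunt-type) decomposition of the Feynman--Kac kernel, which you assert but do not prove; it is standard (see Chung--Zhao or Demuth--van Casteren) and holds for all $y$ by continuity of the kernels, so this is an acceptable citation rather than a gap. Both routes give the same conclusion with comparable effort.
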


\begin{proof}
This formula follows from the stationarity of the potential $V^{\omega}$ with respect to $\Z^d,$ and can be deduced from
\eqref{eq:el-almost-everywhere}. Specializing to $\Lambda_m=[0,m)^d,$ we write
\[L(t)=\lim_{m\to\infty}\mathbb E^{\mathbb Q} L_{\Lambda_m}^\omega(t).\]
Clearly,
\begin{eqnarray*}
\mathbb E^{\mathbb Q}L_{\Lambda_m}^\omega(t)
&=& \frac{ p(t,0,0) }{|\Lambda_m|} \int_{\Lambda_m}\mathbb E^{\mathbb Q}
\mathbf E_{x,x}^t\left[{\rm e}^{-\int_0^t V^\omega(X_s){\rm d}s}\right] {\rm d}x\\
&&  - \ \frac{ p(t,0,0) }{|\Lambda_m|} \int_{\Lambda_m}\mathbb E^{\mathbb Q}
\mathbf E_{x,x}^t\left[{\rm e}^{-\int_0^t V^\omega(X_s){\rm d}s};\tau_{\Lambda_m} \leq t\right] {\rm d}x.
\end{eqnarray*}
Since for any unit cube $C$ in $\R^d$ with vertices in $\Z^d$  the expression
\[\int_C \mathbb E^{\mathbb Q}\mathbf E_{x,x}^t\left[{\rm e}^{-\int_0^t V^\omega(X_s){\rm d}s}\right] {\rm d}x
\]
does not depend on $C,$
we get
\[
L(t)\leftarrow \mathbb E^{\mathbb Q}L_{\Lambda_m}^\omega(t)=  p(t,0,0) \int_{[0,1)^d} \mathbb E^{\mathbb Q}\mathbf E_{x,x}^t\left[{\rm e}^{-\int_0^t V^\omega(Z_s){\rm d}s}\right] {\rm d}x - E_m , \quad  m \to \infty,
\]
where
\[E_m=\frac{ p(t,0,0) }{|\Lambda_m|} \int_{\Lambda_m}\mathbb E^{\mathbb Q}
\ex_{x,x}^t\left[{\rm e}^{-\int_0^t V^\omega(Z_s){\rm d}s};\tau_{\Lambda_m} \leq t\right] {\rm d}x\]
 is the error term.
We only need to show that $E_m\to 0$ as $m \to \infty.$ The proof goes as follows:
\begin{eqnarray*}
|E_m|&\leq & \frac{p(t,0,0)}{|\Lambda_m|}\int_{\Lambda_m} \pr_{x,x}^t\left[\tau_{\Lambda_m} \leq t\right] {\rm d}x
\\
&\leq& p(t,0,0)\left[\frac{1}{m^d}\int_{[\sqrt m, m-\sqrt m)^d} \mathbf \pr_{x,x}^t[\sup_{s\leq t}|Z_s-x|\geq \sqrt m]\,{\rm d}x
+ \frac{m^d- (m-2\sqrt m)^d}{m^d}\right]\\
&\leq & p(t,0,0)\left[\mathbf \pr_{0,0}^t[\sup_{s\leq t}|Z_s|\geq\sqrt m]+  \frac{m^d- (m-2\sqrt m)^d}{m^d}\right]\to 0 \quad \mbox{ as }\,\,  m\to \infty.
\end{eqnarray*}
\end{proof}

\noindent As an immediate consequence of \eqref{eq:el-expression}
we obtain the following two formulas.
\begin{corollary}\label{coro-el-t-properties}
(i) Let $\Lambda\subset \R^d$ be a set composed with unit cubes with vertices in $\Z^d.$
Then
\[L(t)=\frac{ p(t,0,0) }{|\Lambda|}\int_\Lambda \mathbb E^{\mathbb Q}\mathbf E_{x,x}^t\left[{\rm e}^{-\int_0^t V^\omega(Z_s){\rm d}s}\right] {\rm d}x.
\]
(ii)
Let $B_r=B(0,r)\subset \R^d$ be a ball.
Then for any $t>0,$
      \[ L(t)= (1+o(1))\frac{ p(t,0,0) }{|B_r|}\int_{B_r} \mathbb E^{\mathbb Q} \mathbf E_{x,x}^t\left[{\rm e}^{-\int_0^t V^\omega(Z_s){\rm d}s}   \right]\,{\rm d}x ,\quad r\to\infty\]
 (the error term $o(1)$ does not depend on $t$).
\end{corollary}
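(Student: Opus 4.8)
The plan is to read both identities off the explicit formula \eqref{eq:el-expression}; the only substantive input beyond it is the $\Z^d$-periodicity of the function
\[
g_t(x) := \mathbb E^{\mathbb Q}\,\mathbf E_{x,x}^t\!\left[{\rm e}^{-\int_0^t V^\omega(Z_s)\,{\rm d}s}\right], \qquad x \in \R^d .
\]
First I would record that $0 < g_t(x) \leq 1$ for every $x$, since $V^\omega \geq 0$ and $\mathbf P_{x,x}^t$ is a probability measure; crucially the bound $g_t \leq 1$ holds uniformly in $t$, and this is exactly what will make the error term in (ii) independent of $t$. To get periodicity, fix $\mathbf i \in \Z^d$. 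By translation covariance of the stable process and of its bridge measure, $\mathbf E_{x+\mathbf i,x+\mathbf i}^t[\,{\rm e}^{-\int_0^t V^\omega(Z_s)\,{\rm d}s}\,] = \mathbf E_{x,x}^t[\,{\rm e}^{-\int_0^t V^\omega(Z_s+\mathbf i)\,{\rm d}s}\,]$; interchanging $\mathbb E^{\mathbb Q}$ with $\mathbf E_{x,x}^t$ (Fubini applies, the integrand being bounded by $1$) and using that $\Z^d$-stationarity of $V^\omega$ makes $\int_0^t V^\omega(Z_s+\mathbf i)\,{\rm d}s$ have, path by path, the same $\mathbb Q$-law as $\int_0^t V^\omega(Z_s)\,{\rm d}s$, one gets $g_t(x+\mathbf i) = g_t(x)$. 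This is precisely the invariance already used in the proof of the preceding Proposition, where it was noted that $\int_C g_t\,{\rm d}x$ does not depend on the unit cube $C$ with vertices in $\Z^d$.

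For (i): a region $\Lambda$ built from unit cubes with vertices in $\Z^d$ is a disjoint union of $|\Lambda|$ translates of $[0,1)^d$ by vectors in $\Z^d$, so periodicity of $g_t$ yields $\int_\Lambda g_t\,{\rm d}x = |\Lambda|\int_{[0,1)^d} g_t\,{\rm d}x$; dividing by $|\Lambda|$ and substituting into \eqref{eq:el-expression} gives the claimed identity.

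For (ii): I would sandwich $B_r$ between the union $\Lambda_r^-$ of the unit lattice cubes contained in $B_r$ and the union $\Lambda_r^+$ of those meeting $B_r$, so that $\Lambda_r^- \subset B_r \subset \Lambda_r^+$ and $|\Lambda_r^\pm|/|B_r| = 1 + O(1/r)$ as $r \to \infty$ (their symmetric difference sits in an annulus of bounded width). Using $0 \leq g_t \leq 1$, monotonicity of the integral, and part (i) applied to $\Lambda_r^\pm$,
\[
\frac{|\Lambda_r^-|}{|B_r|}\,I(t) \;\leq\; \frac{1}{|B_r|}\int_{B_r} g_t\,{\rm d}x \;\leq\; \frac{|\Lambda_r^+|}{|B_r|}\,I(t), \qquad I(t) := \int_{[0,1)^d} g_t\,{\rm d}x ,
\]
so $\frac{1}{|B_r|}\int_{B_r} g_t\,{\rm d}x = (1+\theta_r)\,I(t)$ with $|\theta_r| \leq \max\big(|\Lambda_r^+|/|B_r|-1,\,1-|\Lambda_r^-|/|B_r|\big) = O(1/r)$ independent of $t$. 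Hence $I(t) = (1+o(1))\,\frac{1}{|B_r|}\int_{B_r} g_t\,{\rm d}x$ with $o(1)$ uniform in $t$; multiplying by $p(t,0,0)$ and invoking \eqref{eq:el-expression} once more gives (ii).

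I do not expect a genuine obstacle: once \eqref{eq:el-expression} is available the corollary is essentially bookkeeping, and for part (ii) the only subtlety — keeping the error term $t$-free — is handled automatically by the uniform bound $g_t \leq 1$. The one step worth a careful word is the periodicity argument, i.e.\ making the Fubini interchange and the appeal to $\Z^d$-stationarity of $V^\omega$ precise; since every integrand lies in $(0,1]$ this is routine and, as noted, was already carried out inside the proof of the preceding Proposition.
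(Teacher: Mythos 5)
Your argument is correct and takes essentially the same route as the paper: part (i) is read off formula \eqref{eq:el-expression} via $\Z^d$-periodicity of $g_t$ (the paper invokes the cube-integral invariance already noted in the preceding Proposition's proof, while you establish the slightly stronger pointwise identity $g_t(x+\mathbf i)=g_t(x)$ from translation covariance of the bridge and stationarity of the $q$'s — same content), and part (ii) is the same sandwich of $B_r$ between the maximal inscribed and minimal circumscribed lattice-cube unions, with volume ratios $1+O(1/r)$ yielding a $t$-uniform error term. No gaps.
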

\begin{proof}
Part (i) is obvious. To prove (ii), for a given ball $B_r$ let $\Lambda_r^0\subset B_r\subset \Lambda_r^1$ be two sets composed of unit cubes, $\Lambda_r^0$ -- the maximal one included in $B_r,$ and
$\Lambda_r^1$ -- the minimal one containing $B_r.$
Then,  by (i),
\begin{eqnarray*}
L(t) &=& \frac{ p(t,0,0) }{|\Lambda_r^0|}\int_{\Lambda_r^0}  \mathbb E^{\mathbb Q} \mathbf E_{x,x}^t\left[{\rm e}^{-\int_0^t V^\omega(Z_s){\rm d}s}   \right]\,{\rm d}x \\
&\leq & \frac{|B_r|}{|\Lambda_r^0|} \frac{ p(t,0,0) }{|B_r|}\int_{B_r}  \mathbb E^{\mathbb Q} \mathbf E_{x,x}^t\left[{\rm e}^{-\int_0^t V^\omega(Z_s){\rm d}s}   \right]\,{\rm d}x \\
&\leq& \frac{r^d}{(r-2\sqrt d)^d}  \frac{p(t,0,0) }{|B_r|}\int_{B_r}  \mathbb E^{\mathbb Q} \mathbf E_{x,x}^t\left[{\rm e}^{-\int_0^t V^\omega(Z_s){\rm d}s}   \right]\,{\rm d}x\\
&=& (1+o(1))\frac{ p(t,0,0) }{|B_r|}\int_{B_r} \mathbb E^{\mathbb Q} \mathbf E_{x,x}^t\left[{\rm e}^{-\int_0^t V^\omega(Z_s){\rm d}s}   \right]\,{\rm d}x
\end{eqnarray*}
and, identically,
\begin{eqnarray*}
L(t) &=& \frac{ p(t,0,0) }{|\Lambda_r^1|}\int_{\Lambda_r^1}  \mathbb E^{\mathbb Q} \mathbf E_{x,x}^t\left[{\rm e}^{-\int_0^t V^\omega(Z_s){\rm d}s}   \right]\,{\rm d}x \\
&\geq & \frac{|B_r|}{|\Lambda_r^1|} \frac{ p(t,0,0) }{|B_r|}\int_{B_r} \mathbb E^{\mathbb Q} \mathbf E_{x,x}^t\left[{\rm e}^{-\int_0^t V^\omega(Z_s){\rm d}s}   \right]\,{\rm d}x \\
&\geq& \frac{r^d}{(r+\sqrt d)^d}  \frac{ p(t,0,0) }{|B_r|}\int_{B_r}  \mathbb E^{\mathbb Q} \mathbf E_{x,x}^t\left[{\rm e}^{-\int_0^t V^\omega(Z_s){\rm d}s}   \right]\,{\rm d}x\\
&\geq& (1-o(1))\frac{ p(t,0,0) }{|B_r|}\int_{B_r}  \mathbb E^{\mathbb Q} \mathbf E_{x,x}^t\left[{\rm e}^{-\int_0^t V^\omega(Z_s){\rm d}s}   \right]\,{\rm d}x.
\end{eqnarray*}
\end{proof}

\section{The asymptotics} \label{sec:asymp}
We first present the results, postponing their proofs to the next section.
We will separately prove the upper- and the lower- bounds for the asymptotics of the Laplace transform of the integrated density of states at infinity, which we then  transform to statements
concerning the IDS itself. We will see that the behaviour of the integrated density of states depends decisively on the properties of the distribution of the random variables $q_{\mathbf i}$ at zero.
When $ F_q(0)=\mathbb Q[q_{\mathbf i}=0]>0$ (i.e. the distribution of the $q$'s has an atom at zero), then we obtain Lifschitz tail with rate
identical as that in the continuous Poisson-Anderson  model from \cite{bib:Oku}. On
the other hand, when this distribution has no atom at zero, then such a rate would be too small -- we obtain an infinite limit. In both cases we actually establish the existence of the limit $\lim_{\lambda \searrow 0}
\lambda^{d/\alpha}\ln \ell([0,\lambda]).$

\subsection{The upper bound for the Laplace transform}\label{sec:upper}
We start with the upper bound, which reads as follows.
\begin{theorem}\label{th:upper-stable}
For any $\kappa> 0$
 we have
\begin{equation}\label{eq:upper-statement}
	\limsup_{t\to\infty} \frac{\ln L(t)}{t^{\frac{d}{d+\alpha}}} \leq -C_{d,\alpha} \left(\ln\frac{1}{F_q(\kappa)}\right)^{\frac{\alpha}{d+\alpha}},
\end{equation}
where the constant $C_{d,\alpha}$ is given by
\begin{equation}\label{eq:opt-constant}
C_{d,\alpha} = \omega_d^{\frac{\alpha}{d+\alpha}}
\left(\frac{d+\alpha}{\alpha}\right)\left(\frac{\alpha \lambda_d^{(\alpha)}}{d}\right)^{\frac{d}{d+\alpha}}.
\end{equation}
In this formula, $\omega_d$ is the volume of the unit ball in $\R^d,$ and $\lambda_d^{(\alpha)}$  is the principal eigenvalue as in Theorem \ref{coro-no-atom-at-0_IDS}.
\end{theorem}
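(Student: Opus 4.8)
The plan is to use the closed formula \eqref{eq:el-expression} for $L$ together with four reductions — to Bernoulli single-site potentials, to a torus, to a shrinking lattice via scaling of the stable process, and to a principal eigenvalue — and then to control that eigenvalue through the nonlocal version of the ``enlargement of obstacles'' method. We may assume $0<F_q(\kappa)<1$, the remaining cases being trivial. \textbf{Step 1 (Bernoulli obstacles, torus).} Fix $\kappa>0$, put $\xi_{\mathbf i}:=\I_{\{q_{\mathbf i}>\kappa\}}$ (i.i.d.\ with $\mathbb Q[\xi_{\mathbf i}=0]=F_q(\kappa)$), and set $\widetilde V^\omega(x):=\kappa b\sum_{\mathbf i\in\Z^d}\xi_{\mathbf i}\,\I_{B(\mathbf i,a_0)}(x)$. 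By {\bf(W2)}, $V^\omega\ge\widetilde V^\omega$; since $\widetilde V^\omega$ is a bounded, nonnegative, $\Z^d$-stationary alloy-type potential, \eqref{eq:el-expression} and Corollary~\ref{coro-el-t-properties}(i) apply to $(-\Delta)^{\alpha/2}+\widetilde V^\omega$, and $\widetilde V^\omega\le V^\omega$ yields, for every $M\in\nat$,
\[
L(t)\ \le\ \frac{p(t,0,0)}{M^d}\int_{[0,M)^d}\mathbb E^{\mathbb Q}\,\mathbf E_{x,x}^{t}\!\Big[{\rm e}^{-\int_0^t\widetilde V^\omega(Z_s)\,{\rm d}s}\Big]{\rm d}x .
\]
Let $\widehat V^{\omega,M}$ be the $M\Z^d$-periodization of the configuration $(\xi_{\mathbf i})_{\mathbf i\in\{0,\dots,M-1\}^d}$; it is $M\Z^d$-periodic, hence defined on $\cT_M$. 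Conditioning on the path and applying, along each residue class $\mathbf i_0+M\Z^d$, the elementary inequality $\prod_k(a+(1-a)c_k)\le a+(1-a)\prod_k c_k$ for $a,c_k\in[0,1]$, one gets $\mathbb E^{\mathbb Q}[{\rm e}^{-\int_0^t\widetilde V^\omega(Z_s){\rm d}s}\mid Z]\le\mathbb E^{\mathbb Q}[{\rm e}^{-\int_0^t\widehat V^{\omega,M}(Z_s){\rm d}s}\mid Z]$; and writing a bridge on $\cT_M$ as a superposition of $\R^d$-bridges indexed by the winding vector and using $\widehat V^{\omega,M}\ge0$ gives $p(t,0,0)\,\mathbf E^t_{x,x}[{\rm e}^{-\int_0^t\widehat V^{\omega,M}(Z_s){\rm d}s}]\le p^{\widehat V^{\omega,M}}_{\cT_M}(t,x,x)$. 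Hence $L(t)\le M^{-d}\,\mathbb E^{\mathbb Q}[{\rm Tr}\,T_t^{\widehat V^{\omega,M},\cT_M}]$.

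\textbf{Step 2 (scaling, principal eigenvalue).} Write $M=mK$, $m,K\in\nat$. The scaling $Z_t\overset{d}{=}KZ_{K^{-\alpha}t}$ identifies the stable process on $\cT_{mK}$ with that on $\cT_m$ run for time $\widetilde t:=t/K^\alpha$, and turns $\widehat V^{\omega,M}$ into $K^\alpha\kappa b\,\I_{\mathcal O^\omega_K}$ on $\cT_m$, where $\mathcal O^\omega_K:=\bigcup_{\mathbf i:\,\widehat\xi^{(M)}_{\mathbf i}=1}B(\mathbf i/K,a_0/K)$. Since $\widehat V^{\omega,M}\ge0$, for $\widetilde t\ge2$,
\[
{\rm Tr}\,T_t^{\widehat V^{\omega,M},\cT_M}=\sum_{k}{\rm e}^{-\widetilde t\lambda_k^\omega}\ \le\ {\rm e}^{-(\widetilde t-1)\lambda_1^\omega}\,{\rm Tr}\,T_1^{\cT_m}\ \le\ c_m\,{\rm e}^{-(\widetilde t-1)\lambda_1^\omega},
\]
with $c_m$ a $t$-independent constant and $\lambda_1^\omega$ the principal eigenvalue of $(-\Delta)^{\alpha/2}+K^\alpha\kappa b\,\I_{\mathcal O^\omega_K}$ on $\cT_m$. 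Everything now hinges on a lower bound for $\lambda_1^\omega$ that holds with good $\mathbb Q$-probability.

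\textbf{Step 3 (enlargement of obstacles, optimization).} By the nonlocal enlargement-of-obstacles method of \cite{bib:KK-KPP2} (the $\alpha$-stable counterpart of \cite{bib:Szn1}), applied on $\cT_m$ — the large factor $K^\alpha\kappa b$ makes the obstacles effectively hard at the relevant scales — there is $\epsilon_m\to0$ as $m\to\infty$ such that, whenever $\cT_m\setminus\mathcal O^\omega_K$ contains no ball of radius $\rho$, one has $\lambda_1^\omega\ge(1-\epsilon_m)\lambda_d^{(\alpha)}\rho^{-\alpha}$. On the other hand, a ball of radius $\rho$ in $\cT_m$ avoiding $\mathcal O^\omega_K$ forces its $\approx\omega_d(\rho K)^d$ lattice sites to carry $\widehat\xi^{(M)}=0$, so
\[
\mathbb Q\big[\cT_m\setminus\mathcal O^\omega_K\ \text{contains a ball of radius}\ \ge\rho\big]\ \le\ (mK)^d\,F_q(\kappa)^{\,\omega_d(\rho K)^d(1+o(1))}.
\]
Splitting $\mathbb E^{\mathbb Q}[c_m\,{\rm e}^{-(\widetilde t-1)\lambda_1^\omega}]$ according to the largest empty ball, using that $\rho\mapsto{\rm e}^{-c\rho^{-\alpha}}$ is increasing, and writing $s:=\rho K$ (so $\widetilde t\,K^\alpha=t$), we obtain
\[
\ln L(t)\ \le\ (1+o(1))\max_{0<s\le mK}\Big(-(1-\epsilon_m)\lambda_d^{(\alpha)}\,\frac{t}{s^{\alpha}}-\omega_d\ln\tfrac{1}{F_q(\kappa)}\,s^{d}\Big)+O\big(\ln(mK)\big).
\]
The unconstrained maximizer is $s_\ast^{\,d+\alpha}=\alpha(1-\epsilon_m)\,t\,\lambda_d^{(\alpha)}\big/\big(d\,\omega_d\ln\frac1{F_q(\kappa)}\big)$, and evaluating the maximum there gives $-(1-\epsilon_m)^{d/(d+\alpha)}C_{d,\alpha}\big(\ln\frac1{F_q(\kappa)}\big)^{\alpha/(d+\alpha)}t^{d/(d+\alpha)}$ with $C_{d,\alpha}$ exactly as in \eqref{eq:opt-constant}. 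Choosing, say, $K=\lceil t^{1/(d+\alpha)}\rceil$ and $m=m(t)\to\infty$ slowly enough that $\widetilde t\to\infty$, $s_\ast\le mK$ eventually, and $\ln(mK),\ \ln c_m=o(t^{d/(d+\alpha)})$, then dividing by $t^{d/(d+\alpha)}$ and letting $t\to\infty$, yields \eqref{eq:upper-statement}.

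\textbf{Main obstacle.} The crux is the eigenvalue estimate of Step 3: producing $\lambda_1^\omega\ge(1-\epsilon_m)\lambda_d^{(\alpha)}\rho^{-\alpha}$ with the \emph{sharp} constant $\lambda_d^{(\alpha)}$ and a correction vanishing as $m\to\infty$, uniformly over the highly irregular random configurations $\mathcal O^\omega_K\subset\cT_m$; for $\alpha<2$ the process can jump over small clearings, so no elementary comparison suffices and the coarse-graining machinery of \cite{bib:KK-KPP2} is genuinely needed. A secondary, more bookkeeping difficulty is to couple the three limits $t\to\infty$, $K\to\infty$, $m\to\infty$ so that all lower-order contributions — polynomial prefactors, boundary layers near $\partial[0,M)^d$, and $\epsilon_m$ — are absorbed while the extremal constant in \eqref{eq:opt-constant} survives.
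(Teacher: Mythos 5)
Your overall architecture matches the paper's proof closely: reduce to Bernoulli obstacles, periodize, pass to the torus, scale $M=mK$ so that the problem lives on $\cT_m$ with obstacles of size $\sim 1/K$, bound the trace by the principal eigenvalue, invoke the nonlocal enlargement-of-obstacles theorem, and optimize the resulting rate function. Your Step~1 is a valid (and in fact cleaner) variant of the paper's Lemma~\ref{lem:periodic}: you first discard $W$ in favour of $b\,\I_{B(\cdot,a_0)}$ and replace $q_{\mathbf i}$ by Bernoulli variables $\kappa\xi_{\mathbf i}$, and then periodize using the elementary Bernoulli inequality $\prod_k(a+(1-a)c_k)\le a+(1-a)\prod_k c_k$ (which follows from the $n=2$ case $a(1-a)(1-c_1)(1-c_2)\ge 0$ and induction). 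The paper reverses the order and proves periodization for arbitrary i.i.d.\ $q_{\mathbf i}$ via a generalized H\"older inequality; both routes are correct.

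The proof, however, has a genuine gap in Step~3. You claim that the enlargement-of-obstacles method yields: \emph{whenever $\cT_m\setminus\mathcal O^\omega_K$ contains no ball of radius $\rho$, then $\lambda_1^\omega\ge(1-\epsilon_m)\lambda_d^{(\alpha)}\rho^{-\alpha}$.} This is false, and it is not what Theorem~\ref{th:compare} gives. Theorem~\ref{th:compare} controls $\lambda_1^{V_\epsilon}$ from below by the hard-obstacle eigenvalue $\lambda_1(b)=\lambda_1^m(\Theta_b)$, but $\lambda_1^m(\Theta_b)$ is \emph{not} bounded below by $\lambda_d^{(\alpha)}\rho^{-\alpha}$ in terms of the inradius $\rho$ of the free region $\Theta_b$. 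Domain monotonicity only gives the reverse: if $\Theta_b$ contains a ball of radius $\rho$, then $\lambda_1^m(\Theta_b)\le\lambda_d^{(\alpha)}\rho^{-\alpha}$. Concretely, for $\alpha=2$, $d=2$ a long thin slab of width slightly under $2\rho$ contains no ball of radius $\rho$ but has Dirichlet eigenvalue $\approx\pi^2/(4\rho^2)$, which is strictly smaller than $\lambda_2^{(2)}\rho^{-2}=j_0^2\rho^{-2}\approx 5.78\,\rho^{-2}$; the analogous slab comparison holds for $\alpha<2$. Consequently, the ``largest empty ball'' decomposition in Step~3 can only yield a constant strictly smaller than $\lambda_d^{(\alpha)}$ in the eigenvalue lower bound, hence a strictly smaller $C_{d,\alpha}$, i.e.\ a weaker inequality than \eqref{eq:upper-statement} (and one that would no longer match the lower bound of Theorem~\ref{th:dolne}).

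What is actually needed, and what the paper does, is the full variational functional $\widehat U\mapsto\lambda_1^m(\widehat U)+\nu\,|\widehat U|$: after the enlargement of obstacles, the paper discretizes the free region into a union of boxes $\widehat U$, estimates $\qpr[U_{b,\epsilon}=U,\widehat U_{b,\epsilon}=\widehat U]\le F_q(\kappa)^{K^d(|\widehat U|-m^d\delta)}$ (so the entropy cost is the \emph{volume} $|\widehat U|$, not $\omega_d\rho^d$), controls the number of configurations by $2^{2(Km\sqrt d/b)^d}$, and reduces the problem to $\inf_{\widehat U}[\lambda_1^m(\widehat U)+\nu|\widehat U|]$. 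The constant $C_{d,\alpha}$ is then identified through Lemma~\ref{lem:sup}, which shows $\sup_m\inf_U[\lambda_1^m(U)+\nu|U|]\ge\inf_{U\subset\R^d}[\lambda_1(U)+\nu|U|]=C_{d,\alpha}\nu^{\alpha/(d+\alpha)}$; the extremality of balls is supplied by the Faber--Krahn inequality, not by inradius considerations. Your Step~3 should be replaced by this box-discretization and configuration count together with Lemma~\ref{lem:sup}; the rest of your outline then goes through.
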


 From this statement, by passing to the limit  $\kappa \searrow 0,$  we immediately see that when the distribution of $q$ has no atom at zero, but its support is not included in any half-line
$[a_0,\infty)$ with $a_0>0,$ then the upper limit in \eqref{eq:upper-statement} is infinite. We can conclude that in that case
the 'Poissonian' rate $t^{\frac{d}{d+\alpha}}$ is too slow. In a companion paper \cite{bib:KK-KPP-alloy2} we use a different method to
identify the correct rate in that case, also for models driven by more general L\'{e}vy processes and with more general potential profiles, possibly unbounded and of noncompact support.

\begin{corollary}\label{coro-no-atom-at-0}
Suppose that the distribution of $q$ has no atom at zero, i.e. $F_q(0)=0.$ Then
	\begin{equation}\label{eq:no-limit}
\lim_{t\to\infty}\frac{\ln L(t)}{t^{\frac{d}{d+\alpha}}}=	 -\infty.
\end{equation}
	\end{corollary}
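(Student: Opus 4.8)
The plan is to derive Corollary~\ref{coro-no-atom-at-0} directly from Theorem~\ref{th:upper-stable} by letting the threshold parameter $\kappa$ tend to $0$. The point is that the inequality \eqref{eq:upper-statement} holds \emph{simultaneously} for every $\kappa>0$, with a right-hand side that depends on $\kappa$ only through the factor $\bigl(\ln\frac{1}{F_q(\kappa)}\bigr)^{\alpha/(d+\alpha)}$, while the constant $C_{d,\alpha}$ in \eqref{eq:opt-constant} is fixed and strictly positive (it is a product of positive quantities: $\omega_d>0$, $(d+\alpha)/\alpha>0$, and $\lambda_d^{(\alpha)}>0$, the latter being the ground-state eigenvalue of $(-\Delta)^{\alpha/2}$ on the unit ball). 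So the left-hand side $\limsup_{t\to\infty} \ln L(t)\,/\,t^{d/(d+\alpha)}$, which does not depend on $\kappa$ at all, is bounded above by $-C_{d,\alpha}\bigl(\ln\frac{1}{F_q(\kappa)}\bigr)^{\alpha/(d+\alpha)}$ for \emph{every} $\kappa>0$, hence by the infimum over $\kappa>0$ of the right-hand side.

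First I would observe that, by assumption $F_q(0)=\mathbb{Q}[q=0]=0$, and since $F_q$ is a cumulative distribution function (hence right-continuous and nondecreasing), we have $F_q(\kappa)\searrow F_q(0)=0$ as $\kappa\searrow 0$. Therefore $\ln\frac{1}{F_q(\kappa)} = -\ln F_q(\kappa) \to +\infty$ as $\kappa\searrow 0$, and consequently $\bigl(\ln\frac{1}{F_q(\kappa)}\bigr)^{\alpha/(d+\alpha)}\to+\infty$ as well, the exponent $\alpha/(d+\alpha)$ being positive. One should note that $F_q(\kappa)>0$ for every $\kappa>0$ by assumption \textbf{(Q)}, so $\ln\frac{1}{F_q(\kappa)}$ is a well-defined finite positive number for each fixed $\kappa>0$, and Theorem~\ref{th:upper-stable} applies to each such $\kappa$.

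Next I would combine these two facts. Fix any $\kappa>0$. Theorem~\ref{th:upper-stable} gives
\[
\limsup_{t\to\infty}\frac{\ln L(t)}{t^{\frac{d}{d+\alpha}}} \leq -C_{d,\alpha}\left(\ln\frac{1}{F_q(\kappa)}\right)^{\frac{\alpha}{d+\alpha}}.
\]
The left-hand side is a fixed (possibly $-\infty$) extended real number independent of $\kappa$. Letting $\kappa\searrow 0$ and using the previous paragraph, the right-hand side tends to $-\infty$ because $C_{d,\alpha}>0$ is a fixed positive constant and the parenthesized factor diverges to $+\infty$. Hence the left-hand side must equal $-\infty$, which is precisely \eqref{eq:no-limit}. (If one prefers to avoid speaking of a limsup equal to $-\infty$, one can phrase it as: for every $M>0$ choose $\kappa>0$ small enough that $C_{d,\alpha}\bigl(\ln\frac{1}{F_q(\kappa)}\bigr)^{\alpha/(d+\alpha)}>M$; then $\limsup_{t\to\infty}\ln L(t)/t^{d/(d+\alpha)}\le -M$; as $M$ is arbitrary, the limsup is $-\infty$, and since $\ln L(t)/t^{d/(d+\alpha)}$ is then also bounded above by $-M$ eventually for every $M$, the limit exists and equals $-\infty$.)

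There is essentially no obstacle here: the corollary is a soft consequence of the uniformity in $\kappa$ of Theorem~\ref{th:upper-stable} together with the elementary behaviour of the distribution function $F_q$ near $0$ under the no-atom hypothesis. The only point requiring a modicum of care is the interchange of the roles of the $\kappa$-free left-hand side and the $\kappa$-dependent right-hand side — i.e., recognising that one may take the infimum over $\kappa>0$ on the right — but this is immediate since the inequality holds for each individual $\kappa$. All the genuine work has already been done in the proof of Theorem~\ref{th:upper-stable}.
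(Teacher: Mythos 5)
Your proof is correct and follows essentially the same route as the paper: the paper itself states (immediately after Theorem~\ref{th:upper-stable}) that Corollary~\ref{coro-no-atom-at-0} follows by passing to the limit $\kappa\searrow 0$ in \eqref{eq:upper-statement}, using assumption \textbf{(Q)} to ensure $F_q(\kappa)>0$ for each $\kappa>0$ so the theorem applies, and the absence of an atom at zero to force $F_q(\kappa)\to 0$. Your additional care in observing that a $\limsup$ equal to $-\infty$ entails the existence of the limit is a correct and appropriate touch.
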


On the other hand, if the distribution of  $q$ has an atom at 0,  then passing to the limit  $\kappa \searrow 0$  leads to the following statement.

\begin{corollary}\label{coro-atom-at-0}
	Suppose that  the distribution of $q$ has an atom at 0, i.e. $ F_q(0)>0.$
	Then
	\[\lim_{t\to\infty}\frac{\ln L(t)}{t^{\frac{d}{d+\alpha}}}\leq -C_{d,\alpha} \left(\ln \frac{1}{ F_q(0)}\right)^{\frac{\alpha}{d+\alpha}}.
	\]
	\end{corollary}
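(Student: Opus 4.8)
The plan is to derive the corollary directly from Theorem \ref{th:upper-stable} by letting the auxiliary parameter $\kappa$ tend to $0$. The point is that the left-hand side of \eqref{eq:upper-statement} does not depend on $\kappa$, while the right-hand side depends on $\kappa$ only through the value $F_q(\kappa)=\mathbb{Q}[q\leq\kappa]$. Since the inequality in Theorem \ref{th:upper-stable} is valid for \emph{every} $\kappa>0$, it may be optimised over $\kappa$; and because $F_q$ is nondecreasing, the sharpest (most negative) bound is the one obtained in the limit $\kappa\searrow 0$. So the strategy reduces to computing that limit of the right-hand side and then passing to the limit in \eqref{eq:upper-statement}.

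First I would record that $F_q(\kappa)\to F_q(0)$ as $\kappa\searrow 0$. This is just continuity from above of the probability measure $\mathbb{Q}$: the events $\{q\leq\kappa\}$ decrease, as $\kappa\searrow 0$, to $\bigcap_{\kappa>0}\{q\leq\kappa\}=\{q\leq 0\}=\{q=0\}$ (the last equality because $q\geq 0$ by assumption \textbf{(Q)}), hence $F_q(\kappa)\to\mathbb{Q}[q=0]=F_q(0)$. Under the standing hypothesis $F_q(0)>0$, combined with the nondegeneracy of $q$ in \textbf{(Q)} (which forces $F_q(0)<1$), the quantity $\ln\frac{1}{F_q(0)}$ lies in $(0,\infty)$. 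Next, since $u\mapsto -C_{d,\alpha}\,u^{\alpha/(d+\alpha)}$ is continuous on $[0,\infty)$, the right-hand side of \eqref{eq:upper-statement} converges to $-C_{d,\alpha}\big(\ln\frac{1}{F_q(0)}\big)^{\alpha/(d+\alpha)}$ as $\kappa\searrow 0$. Passing to the limit in \eqref{eq:upper-statement} then yields $\limsup_{t\to\infty} t^{-d/(d+\alpha)}\ln L(t)\leq -C_{d,\alpha}\big(\ln\frac{1}{F_q(0)}\big)^{\alpha/(d+\alpha)}$, which is the asserted inequality.

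I do not expect any real obstacle here: all the analytic content has been absorbed into Theorem \ref{th:upper-stable}, and this corollary is a soft limiting argument. The single point deserving a word of care is the reading of the statement as an identity involving $\lim_{t\to\infty}$ rather than merely a $\limsup$; the existence of that limit is not available at this stage from the upper bound alone, and it will be secured only once the present estimate is complemented by the matching lower bound of Section \ref{sec:lower}, at which point $\limsup$ and $\liminf$ coincide and the inequality holds for the genuine limit.
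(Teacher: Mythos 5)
Your proposal is correct and follows exactly the paper's route: the corollary is obtained from Theorem \ref{th:upper-stable} by letting $\kappa\searrow 0$ and using the right-continuity of $F_q$ at $0$ (equivalently, continuity from above of $\mathbb{Q}$ on the events $\{q\leq\kappa\}\downarrow\{q=0\}$). Your closing remark about the statement being honestly a $\limsup$ bound, with the genuine limit only secured by the matching lower bound of Theorem \ref{th:dolne}, is also the correct reading.
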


\subsection{The lower bound for the Laplace transform}\label{sec:lower}
When the distribution of the $q_{\mathbf i}$'s has an atom at zero,
a matching lower bound is needed to obtain the existence of the limit
$\lim_{t\to\infty}\frac{\ln L(t)}{t^{\frac{d}{d+\alpha}}}.$ It is provided in the following theorem.
\begin{theorem}\label{th:dolne} Suppose that the distribution of $q$ satisfies
 \begin{equation}\label{assump-F}
 F_q(0)>0. \end{equation}
  Then
\begin{equation}\label{eq:preliminary_lower}
\liminf_{t\to\infty} \frac{\ln  L(t)}{t^{\frac{d}{d+\alpha}} }
\geq -C_{d,\alpha} \left({\ln \frac{1}{F_q(0)}} \right)^{\frac{\alpha}{d+\alpha}}.
\end{equation} Consequently, in this case
\begin{equation}\label{eq:limit}
\lim_{t\to\infty} \frac{\ln  L(t)}{t^{\frac{d}{d+\alpha}} }
 = -C_{d,\alpha} \left({\ln \frac{1}{F_q(0)}} \right)^{\frac{\alpha}{d+\alpha}}.
 \end{equation}
\end{theorem}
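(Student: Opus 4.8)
The plan is to prove the lower bound \eqref{eq:preliminary_lower}; the equality \eqref{eq:limit} then follows by combining it with Corollary~\ref{coro-atom-at-0}. The starting point is the explicit formula for the Laplace transform. By Corollary~\ref{coro-el-t-properties}(ii), for any fixed $t>0$ and any radius $r$,
\[
L(t) = (1+o(1))\,\frac{p(t,0,0)}{|B_r|}\int_{B_r} \Ee^{\qpr}\,\ex_{x,x}^t\!\left[{\rm e}^{-\int_0^t V^\omega(Z_s){\rm d}s}\right]{\rm d}x,
\]
so it suffices to bound the bridge expectation from below on a suitable event. The key idea is to restrict attention to the event that, on a large ball $B(\mathbf 0, R)$ centred at a lattice point, \emph{all} the lattice variables $q_{\mathbf i}$ vanish. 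On this event $V^\omega$ vanishes on $B(\mathbf 0, R-a)$ (using {\bf(W1)}), so the Feynman--Kac exponent is controlled by forcing the bridge to stay inside this ball. The cost of the event is $F_q(0)^{N(R)}$, where $N(R) = \#(\Z^d \cap B(\mathbf 0,R)) = \omega_d R^d(1+o(1))$; the cost of confining the $\alpha$-stable bridge to a ball of radius $\sim R$ over time $t$ is, by the standard small-time/large-ball heat-content asymptotics, of order $\exp(-\lambda_1(B_R)\,t) = \exp(-\lambda_d^{(\alpha)} R^{-\alpha}\, t)$ up to subexponential factors (here using the scaling $\lambda_1(B_R) = R^{-\alpha}\lambda_d^{(\alpha)}$ and the fact that the bridge from $x$ to $x$ survives in $B_R$ with probability comparable to $p_{B_R}(t,x,x)/p(t,x,x)$).

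Concretely, I would proceed as follows. First, fix a scaling relation $R = R(t) = c\, t^{1/(d+\alpha)}$ with a free constant $c>0$ to be optimised at the end. Second, center a ball $B(\mathbf 0, R)$ at the origin (a lattice point) and intersect the integration region with $B(\mathbf 0, R/2)$ say; for $x$ in this smaller ball, on the event $A_R := \{q_{\mathbf i}=0 \text{ for all } \mathbf i \in \Z^d \cap B(\mathbf 0, R)\}$ we have $V^\omega \equiv 0$ on $B(\mathbf 0, R-a)$. Third, bound
\[
\Ee^{\qpr}\ex_{x,x}^t\!\left[{\rm e}^{-\int_0^t V^\omega(Z_s){\rm d}s}\right] \ge \qpr[A_R]\,\ex_{x,x}^t\!\left[t < \tau_{B(\mathbf 0, R-a)}\right] = F_q(0)^{N(R)}\,\frac{p_{B(\mathbf 0,R-a)}(t,x,x)}{p(t,x,x)}.
\]
Fourth, insert this into the formula for $L(t)$, integrate over $x \in B(\mathbf 0, R/2)$, and use the eigenfunction expansion of $p_{B(\mathbf 0, R-a)}(t,x,x)$ together with the ground-state asymptotics: $\int_{B(\mathbf 0,R/2)} p_{B(\mathbf 0, R-a)}(t,x,x)\,{\rm d}x \ge e^{-t\lambda_1(B(\mathbf 0, R-a))} \int_{B(\mathbf 0, R/2)} \varphi_1(x)^2\,{\rm d}x$, and the last integral is bounded below by a constant independent of $R$ after rescaling. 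Fifth, take logarithms, divide by $t^{d/(d+\alpha)}$, and let $t \to \infty$:
\[
\liminf_{t\to\infty}\frac{\ln L(t)}{t^{d/(d+\alpha)}} \ge -\Bigl(\omega_d c^d \ln\tfrac{1}{F_q(0)} + \lambda_d^{(\alpha)} c^{-\alpha}\Bigr).
\]
Sixth, optimise the right-hand side over $c>0$: the minimum of $f(c)=\omega_d c^d \ln\frac{1}{F_q(0)} + \lambda_d^{(\alpha)} c^{-\alpha}$ is attained at $c_*$ with $d\,\omega_d c_*^{d-1}\ln\frac1{F_q(0)} = \alpha\lambda_d^{(\alpha)} c_*^{-\alpha-1}$, and substituting back gives exactly $C_{d,\alpha}(\ln\frac{1}{F_q(0)})^{\alpha/(d+\alpha)}$ with $C_{d,\alpha}$ as in \eqref{eq:opt-constant}; this is a routine calculus check.

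The main obstacle, and the step deserving the most care, is making the confinement estimate for the $\alpha$-stable bridge quantitatively sharp in the regime $R \sim t^{1/(d+\alpha)} \to \infty$ simultaneously with $t\to\infty$ — that is, showing $-\frac{1}{t}\ln\bigl(p_{B_R}(t,x,x)/p(t,x,x)\bigr) \to \lambda_1(B_R)$ \emph{uniformly enough} in $R$ to capture the exact constant, rather than just the correct exponential order. For $\alpha=2$ this is classical; for $\alpha\in(0,2)$ one must use the known two-sided estimates and eigenvalue asymptotics for the killed stable semigroup in a ball together with the scaling property $\lambda_1(B_R)=R^{-\alpha}\lambda_d^{(\alpha)}$, and verify that the spectral-gap and eigenfunction contributions are subexponential on the relevant scale (a $\ln t$ correction is harmless after division by $t^{d/(d+\alpha)}$, since $d/(d+\alpha)<1$). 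A secondary technical point is handling the $(1+o(1))$ and the lattice-counting error $N(R)=\omega_d R^d(1+o(1))$ so that they do not affect the leading constant; both are straightforward since the relevant quantities are exponential in $R^d \asymp t^{d/(d+\alpha)}$ and the corrections are of strictly smaller order.
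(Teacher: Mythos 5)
Your proposal is correct and follows essentially the same route as the paper: force all $q_{\mathbf i}$ in a ball of radius $\sim t^{1/(d+\alpha)}$ to vanish, confine the bridge to that ball, and optimise the radius, which yields exactly $C_{d,\alpha}$. The only difference is in the confinement step: the paper integrates $p_{B_r}(t,x,x)$ over the whole ball and uses the exact inequality $\mathrm{Tr}\, T_t^{B_r}=\sum_n e^{-t\lambda_n(B_r)}\geq e^{-t\lambda_1(B_r)}$, so the "main obstacle" you flag (uniform sharpness of $-t^{-1}\ln(p_{B_R}(t,x,x)/p(t,x,x))\to\lambda_1(B_R)$) never arises — only the one-sided bound is needed, and your own eigenfunction-expansion variant also delivers it exactly.
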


\subsection{The Lifschitz tail}
We conclude with the formal proof of the results on the asymptotic behavior of $\ell[0,\lambda]$ as $\lambda \searrow 0$ that are stated in Theorem \ref{coro-no-atom-at-0_IDS} in Introduction.

\begin{proof}[Proof of Theorem \ref{coro-no-atom-at-0_IDS}]
The result follows directly from statements \eqref{eq:no-limit}, \eqref{eq:limit} via the Fukushima Tauberian theorem of the exponential type \cite[Theorem 2.1]{bib:F}.
\end{proof}

\section{Proof of Theorem \ref{th:upper-stable}}\label{sec:upper-proof}

\subsection{ Periodization of the potential}

In this proof we will work with  processes on tori $\mathcal T_M=  \R^d/(M\Z^d)\approx [0,M)^d,$ where
$M=1,2,...$  By $\pi_M:\R^d\to \mathcal T_M$ we will denote the
canonical projection. To begin with,  we periodize the lattice random variables $\{q_{\mathbf i}\}_{\mathbf i\in \Z^d}$ with respect to $\pi_M$, and then, based on that, we construct a new random potential which we call the {\em Sznitman-type periodization} of the initial potential $V^{\omega}$. More precisely, for $M\geq 1$ we define
\begin{eqnarray}\label{eq:szn-per-of-V}
V_M^{\omega}(x)&:= &\sum_{{\bf i}\in[0, M)^d} \left(q_{\bf i}(\omega) \sum_{{\bf i'}\in \pi_{M}^{-1}({\bf i})} W(x-{\bf i'})\right) \nonumber\\
&=& \sum_{ {\bf i} \in \Z^d} q_{\pi_M({\bf i})} W(x-{\bf i}),\quad x\in\R^d.
\end{eqnarray}
The potential $V_M^{\omega}$ is periodic  in the usual sense:  for any $x\in\R^d,$ we have $V_M^{\omega}(x)=V_M^{\omega}(\pi_M(x)).$
For simplicity, we will use the same letter for the restriction of this potential to $\mathcal T_M.$

The following lemma  will be used for linking the periodized and the un-periodized potential.

\begin{lemma}\label{lem:periodic}
Let $(a_{\bf i})_{{\bf i\in\Z^d}}$ be  real numbers and let $(q_{\bf i})_{{\bf i\in\Z^d}}$
 be nonnegative, i.i.d. random variables  over the probability space $(\Omega, \mathcal A, \mathbb Q)$.  Then, for any $M=1,2,...$
 \begin{equation}\label{eq:two_potentials}
 \mathbb E^{\mathbb Q} \left[ {\rm e}^{-\sum_{{\bf i}\in\Z^d} a_{\bf i}q_{\bf i}}\right]
 \leq
 \mathbb E^{\mathbb Q} \left[ {\rm e}^{-\sum_{{\bf i}\in[0,M)^d} q_{\bf i}
 	 \sum_{{\bf i'}\in \pi_M^{-1}({\bf i})} a_{{\bf i'}}}
  \right].
 \end{equation}
 \end{lemma}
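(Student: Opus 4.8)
The plan is to prove the inequality \eqref{eq:two_potentials} by a convexity argument applied fibrewise over the projection $\pi_M$. First I would organize the sum on the left according to the fibres of $\pi_M\colon \Z^d \to [0,M)^d$. Writing $\mathbf i' \in \pi_M^{-1}(\mathbf i)$ for $\mathbf i \in [0,M)^d$, the key point is that the variables $q_{\mathbf i'}$ for $\mathbf i'$ in a single fibre are i.i.d.\ copies of $q$, and distinct fibres involve disjoint (hence independent) families of $q$'s. So after grouping,
\[
\sum_{\mathbf i \in \Z^d} a_{\mathbf i} q_{\mathbf i} = \sum_{\mathbf i \in [0,M)^d} \ \sum_{\mathbf i' \in \pi_M^{-1}(\mathbf i)} a_{\mathbf i'} q_{\mathbf i'},
\]
and by independence across fibres the left-hand side of \eqref{eq:two_potentials} factorizes into a product over $\mathbf i \in [0,M)^d$ of terms $\mathbb E^{\mathbb Q}\big[\exp(-\sum_{\mathbf i' \in \pi_M^{-1}(\mathbf i)} a_{\mathbf i'} q_{\mathbf i'})\big]$, each expectation now taken over the i.i.d.\ family $(q_{\mathbf i'})_{\mathbf i' \in \pi_M^{-1}(\mathbf i)}$.

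The core estimate is then a single-fibre claim: for an i.i.d.\ family $(q_j)_{j\in J}$ (finite or countable index set $J$) of copies of $q$ and reals $(a_j)_{j\in J}$,
\[
\mathbb E^{\mathbb Q}\Big[{\rm e}^{-\sum_{j\in J} a_j q_j}\Big] \ \le\ \mathbb E^{\mathbb Q}\Big[{\rm e}^{-q\sum_{j\in J} a_j}\Big],
\]
where on the right $q$ is a single copy. To see this, note the map $x \mapsto {\rm e}^{-x}$ is convex; more to the point, for a fixed realization the function $(t_j)_j \mapsto \prod_j {\rm e}^{-a_j t_j} = {\rm e}^{-\sum_j a_j t_j}$ is a log-linear, hence convex, function of the vector $(t_j)$. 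I would invoke the following form of Jensen's inequality for i.i.d.\ variables: if $(q_j)_{j\in J}$ are i.i.d.\ and $\Phi$ is convex on $\R^{J}$, then $\mathbb E\,\Phi\big((q_j)_j\big) \ge \Phi\big((\mathbb E q_j)_j\big)$ — but this is the wrong direction. The right tool is instead the classical fact that $u \mapsto \mathbb E[{\rm e}^{-uq}]$ composed with averaging behaves well: write $\sum_j a_j q_j$, assume first all $a_j \ge 0$ (the $a_j$ here come from the potential applied at bridge points, so they will be nonnegative in the application), set $S = \sum_j a_j$ and let $\theta_j = a_j/S$ be a probability vector; then $\sum_j a_j q_j = S \sum_j \theta_j q_j$ and $\sum_j \theta_j q_j$ is a weighted average of i.i.d.\ copies of $q$. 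By Jensen applied to the \emph{concave} function $v \mapsto \mathbb E[{\rm e}^{-Sv}]$? No — $v\mapsto{\rm e}^{-Sv}$ is convex, so Jensen gives $\mathbb E[{\rm e}^{-S\sum_j\theta_j q_j}] \le \sum_j \theta_j \mathbb E[{\rm e}^{-Sq_j}] = \mathbb E[{\rm e}^{-Sq}]$, which is exactly the claim. Here the first inequality is Jensen's inequality for the convex function ${\rm e}^{-S\,\cdot}$ applied to the convex combination $\sum_j\theta_j q_j$, valid pointwise in $\omega$ and then in expectation; and the final equality uses that all $q_j$ are distributed as $q$. If the index set $J$ is infinite one passes to the limit using monotone or dominated convergence (all exponentials are bounded by $1$ since $a_j, q_j \ge 0$).

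Combining: the left side of \eqref{eq:two_potentials} equals $\prod_{\mathbf i\in[0,M)^d}\mathbb E^{\mathbb Q}[{\rm e}^{-\sum_{\mathbf i'\in\pi_M^{-1}(\mathbf i)}a_{\mathbf i'}q_{\mathbf i'}}]$, which by the single-fibre claim is at most $\prod_{\mathbf i\in[0,M)^d}\mathbb E^{\mathbb Q}[{\rm e}^{-q_{\mathbf i}\sum_{\mathbf i'\in\pi_M^{-1}(\mathbf i)}a_{\mathbf i'}}]$, and by independence of the finitely many $q_{\mathbf i}$, $\mathbf i\in[0,M)^d$, this product equals $\mathbb E^{\mathbb Q}[{\rm e}^{-\sum_{\mathbf i\in[0,M)^d}q_{\mathbf i}\sum_{\mathbf i'\in\pi_M^{-1}(\mathbf i)}a_{\mathbf i'}}]$, which is the right side. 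The main obstacle — really the only subtlety — is getting the direction of Jensen's inequality right and, relatedly, the sign issue: the argument as written uses $a_{\mathbf i'}\ge 0$ so that $S=\sum_{\mathbf i'}a_{\mathbf i'}\ge 0$ and the convex combination lies in the domain where all quantities are controlled; I would either check that the $a_{\mathbf i}$ arising in the application (integrals of $W$ against the bridge occupation measure) are indeed nonnegative, which they are since $W\ge 0$, or note that the convexity of ${\rm e}^{-S\cdot}$ needs no sign restriction on $S$ and only the convex-combination structure $\sum_j\theta_j q_j$ with $\theta_j\ge0$, $\sum_j\theta_j=1$ — so in fact one only needs $a_{\mathbf i'}\ge 0$ within each fibre, which holds. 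A secondary point to handle carefully is the interchange of the (possibly infinite) sum over the fibre with the expectation and the convergence of the infinite product over $[0,M)^d$ when some fibres are infinite; since every factor is in $(0,1]$ and bounded below by the corresponding right-hand factor (also in $(0,1]$), monotone/dominated convergence closes this without trouble.
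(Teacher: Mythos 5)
Your proposal is correct and follows essentially the same route as the paper's proof: reduce to finite sums by monotone/dominated convergence, group the sum over the fibres of $\pi_M$, factorize the expectation using independence \emph{across} fibres, and establish the single-fibre estimate $\mathbb E[{\rm e}^{-\sum_j a_j q_j}]\le\mathbb E[{\rm e}^{-q\sum_j a_j}]$ by a convexity argument --- the paper uses the generalized H\"older inequality with exponents $p_l=(\sum_j a_j)/a_l$ where you use pointwise Jensen for the convex combination $\sum_j\theta_j q_j$, but since the $q_j$ are identically distributed the geometric and arithmetic means coincide and both yield exactly $\mathbb E[{\rm e}^{-Sq}]$. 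You are also right to flag that nonnegativity of the $a_{\bf i}$ is genuinely needed (the lemma says ``real'' but the inequality can fail for mixed signs); the paper's proof likewise silently assumes $a_{\bf i}\ge 0$, which is what holds in the application since $W\ge 0$.
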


\begin{proof}
	Clearly, since the all the $a_{\bf i}$'s and the $q_{\bf i}$'s are nonnegative, it is enough the prove the
	statement for finite sums:
	 \[\mathbb E^{\mathbb Q} \left[ {\rm e}^{-\sum_{{\bf i}\in[-kM,kM)^d} a_{\bf i}q_{\bf i}}\right]
	\leq
	\mathbb E^{\mathbb Q} \left[ {\rm e}^{-\sum_{{\bf i}\in[0,M)^d}
	 q_{\bf i}	\sum_{{\bf i'}\in \pi_M^{-1}({\bf i})\cap [kM,kM)^d} a_{{\bf i'}}}
	\right]
	\]
	and then to pass to the limit $k\to\infty$ using dominated convergence.
	We have:
	\[\sum_{{\bf i}\in [-kM,kM)^d} a_{\bf i} q_{\bf i} = \sum_{{\bf i}\in[0,M)^d} \sum_{{\bf i'}\in \pi_M^{-1}({\bf i})\cap [-kM,kM)^d} a_{\bf i'} q_{\bf i'}.\]
	From this and the independence of the $q_{\bf i}$'s,
	\[\mathbb E^{\mathbb Q} \left[{\rm e}^{-\sum_{{\bf i}\in [-kM,kM)^d} a_{\bf i} q_{\bf i} }\right] = \prod_{{\bf i}\in [0,M)^d} \mathbb E^{\mathbb Q} \left[{\rm e}^{-\sum_{{\bf i'}\in \pi_M^{-1}({\bf i})\cap [-kM,kM)^d} a_{\bf i'} q_{\bf i'}}\right].
	\]
	Fix now a lattice point ${\bf i}\in[0,M)^d$ and label the lattice points in $ \pi_M^{-1}({\bf i})\cap [-kM,kM)^d$ as ${\bf i}_1,...,{\bf i}_{n_k}.$  Without loss of generality all the coefficients $a_{\bf i}$ are nonzero -- otherwise we just remove the corresponding lattice points.  We now apply the generalized H\"{o}lder inequality with $p_l= \frac{a_{{\bf i}_1}+...+a_{{\bf i}_{n_k}}}{a_{{\bf i}_l}},$  $l=1,...,n_k,$ to obtain
	\[\mathbb E^{\mathbb Q} \left[{\rm e}^{-(a_{{\bf i}_1}q_{{\bf i}_1}+...+ a_{{\bf i}_{n_k}} q_{{\bf i}_{n_k}})}\right]\leq
	\prod_{l=1}^{n_k}\left(\mathbb E ^{\mathbb Q} \left[{\rm e}^{-q_{{\bf i}_l}(a_{{\bf i}_1}+...+a_{{\bf i}_{n_k}})}\right]\right)^{\frac{a_{{\bf i}_l}}{a_{{\bf i}_1}+...+a_{{\bf i}_{n_k}}}}=
	\mathbb E ^{\mathbb Q} \left[{\rm e}^{-q_{{\bf i}}(a_{{\bf i}_1}+...+a_{{\bf i}_{n_k}})}\right],
	\]
	which proves the statement.
\end{proof}	

\begin{remark}{\rm At a first glance it might be confusing that on the right-hand side of \eqref{eq:two_potentials} we deal with infinite sums
$\sum_{\mathbf i'\in \pi_M^{-1}(\mathbf i)} a_{\mathbf i'} $ which, if infinite, may cause the entire right-hand side expression to be zero.
However, notice that in that case we would have $\sum_{\mathbf i'\in\pi_M^{-1}(\mathbf i)}q_{\mathbf i'}a_{\mathbf i'}=\infty$  a.s. as well.
 To see this, just apply the Kolmogorov's three-series theorem: if $X_n$ are i.i.d., nonnegative  and nondegenerate  random variables, and $a_n$ -- nonnegative numbers such that $\sum_n{a_n}=\infty$ then, denoting $a=\sup_n a_n,$
$$
\sum_{n}\mathbb E\left(a_nX_n\mathbf 1_{\{a_nX_n\leq 1\}}\right)
\geq \sum_{n}\mathbb E\left(a_nX_n\mathbf 1_{\{a X_n\leq 1\}}\right)=\mathbb E\left(X_1\mathbf 1_{\{a X_1\leq 1\}}\right) \sum_n a_n=\infty,
$$
which implies that $\sum_n a_nX_n=\infty$ a.s. }
\end{remark}

\subsection{Stable processes on tori and their Schr\"{o}dinger perturbations}

Our next tools will be  stable processes on tori $\mathcal T_M,$
$M\geq 1,$
defined  pathwise as
\[Z_t^M:= \pi_M(Z_t), \quad t\geq 0.\]
They will be again symmetric Markov  (and, in fact, Feller and strong Feller)  processes, with transition probability densities given by
\[p_M(t,x,y) = \sum_{y'\in\pi_M^{-1}(y)} p(t,x,y'),\quad t>0, \ x,y\in\mathcal T_M.\]
 Using the regularity properties and the estimates of the transition densities of the free process on $\R^d,$ we can deduce that for any $M\in\Z^d$, the function $p_M(\cdot,\cdot,\cdot)$ is continuous on $(0,\infty) \times \mathcal T_M \times \mathcal T_M$ and $p_M(t,\cdot,\cdot)$ is bounded on $\mathcal T_M \times \mathcal T_M$, for every fixed $t>0$. In particular, one can easily see that
\begin{align} \label{eq:diag-on-est}
 \sup_{x\in\mathcal T_M} p_M(1,x,x) = \sup_{x \in [0,M)^d}\sum_{\mathbf i\in M\Z^d} p(1,x, x+\mathbf i) = \sum_{\mathbf i\in M\Z^d} p(1,0, \mathbf i) \leq \sum_{\mathbf i \in \Z^d} p(1,0, \mathbf i) =: C_1
 \end{align}
(note that the constant $C_1$ does not depend on $M$).  For $\alpha \in (0,2)$ (the pure jump case), the L\'{e}vy kernel of the $\alpha-$stable process on $\mathcal T_M$ is given by
\begin{equation}\label{eq:def-nu-m}
\nu_M(x,y)=\sum_{y'\in\pi_M^{-1}(y)} \nu(x,y') = \sum_{y'\in\pi_M^{-1}(y)} \frac{\mathcal A_{d,-\alpha}}{|x-y'|^{d+\alpha}},\quad x,y\in\mathcal T_M, \ \ x \neq y.
\end{equation}

For $x\in \mathcal T_M, $ by  $\mathbf P^M_x$ we denote the family of measures on $D([0,\infty), \mathcal T_M)$ (the space of c\`adl\`ag, $\mathcal T_M-$valued functions with Skorohod topology) induced by the kernels $p_M(t,x,y),$  i.e. $\mathbf P^M_x(Z_t^M \in {\rm d}y) = p_M(t,x,y) {\rm d}y$ (as usual, $\ex^M_x$ is the corresponding expectation). As before, for $t>0$ and $x,y\in\mathcal T_M,$
$\mathbf P_{x,y}^{M,t}$ denotes the bridge measure of the process $Z_t^M,$ conditioned to have $ Z_t^M=y,$ $\mathbf P_x^M-$almost surely, defined by a relation similar to \eqref{eq:bridge}.

The bridge measures for the
process in $\R^d$ and on the torus $\mathcal T_M$ are related through the
following identity.

\begin{lemma}
	\label{lm:rotation}
	 For every $t>0$, $x,y \in \R^d$, $M=1,2,...$ and any set $A \in \mathcal B(D[0,t],\mathcal T_M)$ we have
	\begin{equation}\label{eq:rot1}
	p_M(t,\pi_M(x),\pi_M(y))\mathbf P^{M,t}_{\pi_M(x),\pi_M(y)}[A]=
	\sum_{y'\in\pi_M^{-1}(\pi_M(y))} p(t,x,y')\mathbf
	P^t_{x,y'}[\pi_M^{-1}(A)]\end{equation}	
\end{lemma}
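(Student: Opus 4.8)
The plan is to unwind both sides of \eqref{eq:rot1} through the defining property \eqref{eq:bridge} of the bridge measures, reducing the identity to a statement about unconditioned expectations of cylinder functionals, which then follows directly from the pathwise definition $Z^M_t = \pi_M(Z_t)$. Since both sides of \eqref{eq:rot1} are (by the standard construction of bridge measures) finite measures on $D([0,t],\mathcal T_M)$ that are determined by their values on cylinder sets $A \in \sigma(Z^M_u : u \le s)$ for $s<t$, it suffices by a monotone-class / weak-continuity argument to verify the identity for such $A$ and then extend to $s=t$ by weak continuity (exactly as both bridge measures are themselves extended to $s=t$ in the text).

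So first I would fix $s \in (0,t)$ and $A \in \sigma(Z^M_u : u \le s)$. Writing $A = \{(Z^M_u)_{u\le s} \in B\}$ for a Borel set $B$ in path space up to time $s$, and using $Z^M_u = \pi_M(Z_u)$, the preimage $\pi_M^{-1}(A)$ (interpreted at the level of paths) is exactly $\{(Z_u)_{u \le s} \in (\pi_M)^{-1}_*(B)\}$, i.e. the event that the $\R^d$-path projects into $B$. Applying \eqref{eq:bridge} on the torus to the left-hand side of \eqref{eq:rot1} gives
\[
p_M(t,\pi_M(x),\pi_M(y))\,\pr^{M,t}_{\pi_M(x),\pi_M(y)}[A] = \ex^M_{\pi_M(x)}\!\left[\mathbf 1_A\, p_M(t-s, Z^M_s, \pi_M(y))\right].
\]
Now I would push this expectation back to $\R^d$: since $\pr^M_{\pi_M(x)}$ is the law of $\pi_M(Z)$ under $\pr_x$, the right-hand side equals $\ex_x\big[\mathbf 1_{\{(Z_u)_{u\le s}\in B\}}\, p_M(t-s,\pi_M(Z_s),\pi_M(y))\big]$, and then I substitute the series formula $p_M(t-s,\pi_M(Z_s),\pi_M(y)) = \sum_{y'\in\pi_M^{-1}(\pi_M(y))} p(t-s, Z_s, y')$.

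For the right-hand side of \eqref{eq:rot1}, I would apply \eqref{eq:bridge} for the $\R^d$-process to each term: $p(t,x,y')\,\pr^t_{x,y'}[\pi_M^{-1}(A)] = \ex_x\big[\mathbf 1_{\{(Z_u)_{u\le s}\in B\}}\, p(t-s, Z_s, y')\big]$. Summing over $y' \in \pi_M^{-1}(\pi_M(y))$ and exchanging the (nonnegative, or absolutely convergent by the diagonal bound \eqref{eq:diag-on-est}-type estimates on $p_M$) sum with the expectation yields precisely the expression obtained for the left-hand side. This proves the identity on $\sigma(Z^M_u:u\le s)$ for every $s<t$; letting $s \uparrow t$ and using weak continuity of both bridge measures (and continuity of $p, p_M$ in the relevant arguments) extends it to all of $\mathcal B(D[0,t],\mathcal T_M)$. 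I expect the only genuinely delicate point to be the bookkeeping around $\pi_M^{-1}(A)$ — making precise that the projection map $D([0,t],\R^d)\to D([0,t],\mathcal T_M)$ is measurable, that $\pi_M^{-1}(A)$ in the statement means its preimage under this path-space projection, and that the interchange of the infinite sum over $y'$ with $\ex_x$ is justified (which follows from Tonelli, since all terms are nonnegative); the rest is a routine application of the bridge-measure definition.
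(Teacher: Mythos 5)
Your proposal is correct and follows the same route the paper indicates: the paper's proof is simply the one-line remark that the identity is readily verified on cylinder sets and then extended by the Monotone Class Theorem, and your argument --- unwinding both sides through the bridge-measure definition \eqref{eq:bridge}, using the pathwise relation $Z^M_t=\pi_M(Z_t)$, the series formula for $p_M$, and a Tonelli interchange --- is precisely the verification on cylinder sets that the paper leaves implicit.
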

This statement  is readily seen for cylindrical sets and then
extended to the desired range of $A$'s by the Monotone Class Theorem.
Its fractal counterpart was discussed in \cite[Lemma 2.6]{bib:KK-KPP1}.

For $f \in L^2(\mathcal T_M, {\rm d}x)$ and $t>0$ we let
\begin{eqnarray*}
\mathcal E^{M}_t(f,f)&=& \frac{1}{t}\left\langle f-T_t^{M}f,f\right\rangle,
\end{eqnarray*}
where $T_t^{M} f(x) = \ex^M_x f(Z^M_t)=\int_{\mathcal T_M} p_M(t,x,y) f(y) {\rm d}y$ denotes the action of the transition semigroup operator of the process $(Z_t^M)_{t \geq 0}$. For every fixed $f \in L^2(\mathcal T_M, {\rm d}x)$, $t \mapsto \mathcal E^{M}_t(f,f)$ is a nonnegative and nondecreasing function. The quadratic form corresponding to the process
$(Z_t^M)_{t \geq 0}$ is then defined by
$$
\mathcal E^{M}(f,f) = \lim_{t \searrow 0} \mathcal E^{M}_t(f,f), \quad f \in L^2(\mathcal T_M, {\rm d}x).
$$
The domain $D(\mathcal E^{M})$ of this form consists of those functions $f \in L^2(\mathcal T_M, {\rm d}x)$ for which this limit is finite.
We have the following close formulas:
\[\mathcal E^{M}(f,f)
= \frac{1}{2}\int_{\mathcal T_M}\int_{\mathcal T_M} (f(x)-f(y))^2\nu_M(x,y){\rm d}x{\rm d}y\]
for $\alpha\in (0,2)$ (the pure jump case), and
\[\mathcal E^{M}(f,f)
= \frac{1}{2}\int_{\mathcal T_M} |\nabla f(x)|^2 {\rm d}y,\]
for $\alpha=2$ (the Brownian motion case).

Throughout this section, we will also consider the Feynman-Kac semigroups of the projected $\alpha-$stable process on tori $\mathcal T_M$, $M \in \Z_+$. More precisely, for a periodized potential $V_M^{\omega}(x)$ (given by \eqref{eq:szn-per-of-V}) restricted to $\mathcal T_M$, we define the operators
$$
T_t^{M,V^{\omega}_M} f(x) = \ex^M_x\left[{\rm e}^{-\int_0^t V^{\omega}_M(Z^M_s) {\rm d}s} f(Z^M_t)\right], \quad f \in L^2(\mathcal T_M, {\rm d}x), \quad t>0.
$$
These operators are bounded and symmetric, and they form a strongly continuous semigroup on $L^2(\mathcal T_M, {\rm d}x)$. For every $t>0$, $T_t^{M,V^{\omega}_M}$ is an integral operator with the kernel
$$
p_{M}^{V^{\omega}_M}(t,x,y) = p_M(t,x,y) \ex_{x,y}^{M,t} \left[{\rm e}^{-\int_0^t V^{\omega}_M(Z^M_s) {\rm d}s} \right], \quad t> 0, \ x,y \in \mathcal T_M.
$$
As $V^\omega_M$ is bounded, the corresponding quadratic form is given by
$$
\mathcal E^{M, V_M^\omega}(f,f)
= \mathcal E^{M}(f,f) + \int_{\mathcal T_M} V_M^\omega(x) f^2(x){\rm d}x,\quad f\in\mathcal D(\mathcal E^{M, V_M^\omega}) = \mathcal D(\mathcal E^{M}).
$$
Since $\sup_{x,y \in \mathcal T_M} p_{M}^{V^{\omega}_M}(t,x,y) \leq  \sup_{x,y \in \mathcal T_M} p_M(t,x,y) < \infty$ and the measure of $\mathcal T_M$ is finite,
each $T_t^{M,V^{\omega}_M}$, $t>0$, is a Hilbert-Schmidt operator. In consequence, there exists a complete orthonormal set $\left\{\varphi_k^{M,V^{\omega}_M}\right\}_{k=1}^{\infty}$ in $L^2(\mathcal T_M,{\rm d}x)$, consisting of eigenfunctions of the operators $T_t^{M,V^{\omega}_M}$. More precisely, we have
$$
T_t^{M,V^{\omega}_M} \varphi_k^{M,V^{\omega}_M} = e^{-\lambda_k^{M,V^{\omega}_M} t} \varphi_k^{M,V^{\omega}_M}, \quad t> 0, \ k=1,2,...
$$
The corresponding eigenvalues satisfy $0\leq \lambda_1^{M,V^{\omega}_M} < \lambda_2^{M,V^{\omega}_M}\leq \lambda_3^{M,V^{\omega}_M} \leq \ldots \to \infty$; each $\lambda_k^{M,V^{\omega}_M}$ is of finite multiplicity.  The ground state eigenvalue $\lambda_1^{M,V^{\omega}_M}$ is simple and it can be expressed via the variational formula
\begin{equation}\label{eq:def-lambda-MV}
\lambda_1^{M,V_M^\omega}=\inf\{ \mathcal E^{M, V_M^\omega}(f,f): f\in L^2(\mathcal T_M, {\rm d}x), \|f\|_{2}=1\}.
\end{equation}

In Section \ref{sec:cont} we will also need the process $(Z_t^M)_{t \geq 0}$ killed on leaving an open and nonempty set $U \subset \mathcal T_M$ (this final application requires $V_M^\omega\equiv 0$ only). The transition semigroup of such a process consists of symmetric operators
$$
T_t^{M,U} f(x) = \ex^M_x\left[f(Z^M_t); t< \tau_U\right] = \int_{U} p_M(t,x,y) \ex_{x,y}^{M,t} \left[{\rm e}^{-\int_0^t V^{\omega}_M(Z^M_s) {\rm d}s}; t< \tau_U \right] f(y) {\rm d}y, \quad  t>0.
$$
By the same standard arguments as above, they are Hilbert-Schmidt in $L^2(U, {\rm d}x)$. This ensures the pure point spectrum and nondegeneracy of the ground state. Again, the ground state eigenvalue can be identified via the variational formula
\begin{equation}\label{eq:def=lambdaMU}
\lambda_1^M(U) = \inf\{ \mathcal E^{M}(f,f): f\in L^2(\mathcal T_M, {\rm d}x),\, \mbox{supp}\,f\subset U,\, \|f\|_{2}=1\}.
\end{equation}
Recall that this infimum is attained for unique $\varphi \in L^2(U, {\rm d}x)$ called the ground state eigenfunction.

\subsection{ Proof of Theorem \ref{th:upper-stable} -- the preparatory steps.} Suppose $M\in\mathbb Z_+$ is given.
We  use the representation of $L(t)$ from  Corollary \ref{coro-el-t-properties}(i):
\begin{equation}\label{eq:aab}
L(t)=\frac{ p(t,0,0)}{M^d}\int_{[0,M)^d}\mathbb E^{\mathbb Q} \mathbf E_{x,x}^t \left[{\rm e}^{-\int_0^t V^\omega(Z_s){\rm d}s}\right]{\rm d}x.
\end{equation}
 For any given realization of the free process, $Z_s(w),$
we can apply Lemma \ref{lem:periodic} with coefficients $a_{\bf i}= \int_0^t W(Z_s(w)- {\bf i})\,{\rm d}s$  to  get that
\[\mathbb E^{\mathbb Q}  \left[{\rm e}^{-\int_0^t V^\omega(Z_s(w)){\rm d}s} \right]\leq \mathbb E^{\mathbb Q}  \left[{\rm e}^{-\int_0^t  V_M^{\omega}(Z_s(w)){\rm d}s}\right],\]
therefore we can swap $V_M^{\omega}$ for $V^\omega$ in \eqref{eq:aab}, getting (for any fixed value of $M$)
\[
L(t)\leq\frac{1}{M^d}\int_{[0,M)^d} p(t,x,x)\mathbb E^{\mathbb Q} \mathbf E_{x,x}^t \left[{\rm e}^{-\int_0^t  V_M^{\omega}(Z_s){\rm d}s}\right]{\rm d}x.
\]
Further, we want to replace the process  $(Z_t)_{t \geq 0}$ on $\R^d$ with the projected process on $\mathcal T_M$ --  the transition density $p(t,x,x)$ and the bridge measures $\mathbf E_{x,x}^t$ will replaced by their torus-counterparts $p_M(t,x,x)$ and $\mathbf E^{M,t}_,$ pertaining to the projected process  $(Z_t^M)_{t \geq 0}$  on the torus.
This procedure can only increase the integral,
which follows from Lemma \ref{lm:rotation}. Indeed, since the potential $V_M^{\omega}$ is periodic, for any $x\in [0,M)^d$ the expression
$p(t,x,x)\mathbf E_{x,x}^t\left[{\rm e}^{-\int_0^tV_M^{\omega}(Z_s){\rm d}s}\right]$
is just the first term in the expansion of
$p_M(t,x,x)\mathbf E_{x,x}^{M,t}\left[{\rm e}^{-\int_0^tV_M^{\omega}(Z_s^M){\rm d}s}\right]$ arising from \eqref{eq:rot1}.
We arrive at
\[
L(t)\leq\frac{1}{M^d}\int_{\mathcal T_M} p_M(t,x,x)\mathbb E^{\mathbb Q} \mathbf E_{x,x}^{M,t} \left[{\rm e}^{-\int_0^t V_M^{\omega}(Z_s^M){\rm d}s}\right]{\rm d}x.
\]

Let now $\kappa>0$  be fixed.
The functional  $e_{V_M^{\omega}}(w,t)= {\rm e}^{-\int_0^t V_M^{\omega}(Z_s^M(w))\,{\rm d}s}$ will not be diminished if we
replace the random variables $q_{\bf i}$ with
new random variables $q_{\bf i}^{(\kappa)}\leq q_{\bf i},$ defined as
\[
q_{\bf i}^{(\kappa )}= \left\{\begin{array}{ll}
0 &\mbox{if } q_{\bf i}(\omega) \leq \kappa,\\
\kappa & \mbox{if }q_{\bf i}(\omega)> \kappa.
\end{array}
\right.
\]

\noindent
We denote the resulting potential by $V_M^{\kappa}$ (we drop the dependence on $\omega$ from now on).		
After all these operations we have:
 \begin{eqnarray*}
  L(t) &\leq& \frac{1}{M^d}\,\mathbb E^{\mathbb Q}\int_{\mathcal T_M} p_M(t,x,x)\mathbf E^{M,t}_{x,x}
	\left[ {\rm e}^{-\int_0^t V^{\kappa}_M(Z_s^M)\,{\rm d}s}\right]\,{\rm d}x\\
	&=& \frac{1}{M^d}\, \mathbb E^{\mathbb Q}\mbox{Tr}\,  T_t^{M,V^{\kappa}_M}  = \frac{1}{M^d}\mathbb E^{\mathbb Q}\sum_{n=1}^\infty {\rm e}^{-t \lambda_n^{M,V^{\kappa}_M}}
\end{eqnarray*}
we have $0 \leq \lambda_1^{M,V^{\kappa}_M} <  \lambda_2^{M,V^{\kappa}_M} \leq  \lambda_3^{M,V^{\kappa}_M} \leq ...$, it follows that
for $M=1,2,...$ and $t>1$
\begin{eqnarray*}
	L(t) & \leq & \mathbb{E}^{\qpr} \left[{\rm e}^{-(t-1)  \lambda_1^{M,V^{\kappa}_M}} \cdot \frac{1}{M^d} \sum_{n=1}^\infty {\rm e}^{-\lambda_n^{M,V_M^{\kappa}}}\right]
	\\
	&=& \mathbb{E}^{\qpr}\left[ {\rm e}^{-(t-1)  \lambda_1^{M,V^{\kappa}_M}} \cdot \frac{1}{M^d}\,
	\mbox{Tr} \, T_1^{M,V^{\kappa}_M}\right]
	\\ & \leq & \mathbb{E}^{\qpr} \left[{\rm e}^{-(t-1)  \lambda_1^{M,V^{\kappa}_{M}}}\right]  \cdot \frac{1}{M^d} \int_{\mathcal T_M} p_M(1,x,x) {\rm d}x,
\end{eqnarray*}
and using \eqref{eq:diag-on-est}, we can write
\begin{equation}\label{eq:el-em1}
L(t)\leq \frac{C_1}{M^d}\, \mathbb{E}^{\qpr} \left[{\rm e}^{-(t-1)  \lambda_1^{M,V^{\kappa}_M}}\right].
\end{equation}

To proceed, we need to obtain a nice estimate on $\lambda_1^{M,V^{\kappa}_M}.$  To this end, we will
employ Sznitman's enlargement of obstacles technique (the `coarse-graining' procedure). As the last preparatory step, we perform the scaling.

	Let $ m\in\mathbb Z_+$ be given. It will be considered fixed in this stage of the  proof.  Consider $M$'s being multiples of $m,$  i.e.  $M=Km, $ $K=1,2,...$
Afterwards, the number $K$ will depend on $t$  and will eventually tend to infinity (so far, the estimate \eqref{eq:el-em1} is valid for any $t>1$ and any $M\in\mathbb Z_+).$ Recall also that the parameter $\kappa>0$ is fixed throughout the entire proof.

	We intend to reduce our situation  to
	the problem on the torus $\mathcal T_m\approx [0,m)^d,$ equipped with the normalized Lebesgue measure, which we denote by $|\cdot|_m.$
	 Let
	\[
	 \widetilde V^{\kappa}_{K,m}(x):= K^\alpha V^{\kappa}_{Km}(Kx),\quad x\in \mathcal T_m\]
	(recall that $\alpha \in (0,2]$ is the stability index of the process).
This potential can be explicitly written as:
\begin{equation}\label{eq:pot-scaled}
	\widetilde V^{\kappa}_{K,m}(x)=K^\alpha\sum_{\mathbf i\in \Z^d: \frac{\mathbf i}{K}\in\mathcal T_m} q_{\pi_M(\mathbf i)}^{(\kappa)}(\omega)  W(Kx-{\mathbf i}),\quad x\in\mathcal T_m.
\end{equation}

	From the scaling of  the $\alpha-$stable process (see \cite[Lemma 4.3]{bib:KK-KPP2} for a general argument presented in a fractal setting) we deduce, for $M=Km,$
	\begin{equation}\label{eq:lambda-scal}
	\lambda_1^{M, V^{\kappa}_M} = 	 \lambda_1^{Km,V^{\kappa}_{Km}} = \frac{1}{K^\alpha}\,\lambda_1^{m,\widetilde{V}^{\kappa}_{K,m}}.
	\end{equation}
Indeed, to prove \eqref{eq:lambda-scal}, first let $f\in L^2(\mathcal T_M),$ $\|f\|_{L^2(\mathcal T_M)}$ be the ground state for $H^{M,V^\kappa_M};$ from the variational formula \eqref{eq:def-lambda-MV}
we have $\mathcal E^{M,V_M^\kappa}(f,f)= \lambda_1^{M,V_M^\kappa}.$
Let now $g(x)= K^{d/2}f(Kx),$ $x\in \mathcal T_m.$
We immediately verify that $\int_{\mathcal T_m} g^2(x){\rm d}x=\int_{\mathcal T_M} f^2(x){\rm d}x=1,$
and
\begin{equation}\label{eq:f1}
\int_{\mathcal T_m} g^2(x) \widetilde V^\kappa_{K,m}(x){\rm d}x =K^\alpha \int_{\mathcal T_M} f^2(x) V^\kappa_M(x){\rm d}x.
\end{equation}
Moreover,
\begin{eqnarray}\label{eq:f2}
\int_{\mathcal T_m}\int_{\mathcal T_m}(g(x)-g(y))^2 \nu_m(x,y){\rm d}x{\rm d}y & = & \int_{\mathcal T_m}\int_{\mathcal T_m}K^d(f(Kx)-f(Ky))^2 \nu_m(\frac{Kx}{K},\frac{Ky}{K}){\rm d} x{\rm d}y\nonumber\\
&=& K^\alpha\int_{\mathcal T_M}\int_{\mathcal T_M}(f(x)-f(y))^2\nu_M(x,y){\rm d}x{\rm d}y
\end{eqnarray}
(in the last line we have used the explicit formula \eqref{eq:def-nu-m} for the L\'{e}vy density of the $\alpha-$stable processes on $\mathcal T_m$ and $\mathcal T_M$).
Adding identities \eqref{eq:f1} and \eqref{eq:f2} we get
\[\mathcal E^{m, \widetilde V_{K,m}^\kappa}(g,g)= K^\alpha \mathcal E^{M, V^\kappa_M}(f,f)=K^\alpha\lambda_1^{M,V_M^\kappa},\]
and the variational formula yields $\lambda_1^{m,\widetilde V_{K,m}^\kappa}\leq K^\alpha \lambda_1^{M, V_M^\kappa}.$

To get the opposite inequality we proceed identically, starting this time with  $g$ being the ground state for $H^{m,\widetilde V^{\kappa}_{K,m}}.$

	\subsection{ Enlargement of obstacles technique in present case}
To make the article self-contained, we sketch here a version of Sznitman's theorem, proven in present setting in the Appendix of \cite{bib:KK-KPP2}.
The setup for this theorem consists of:
\begin{itemize}
\item[$*$] a compact linear metric space $({\mathcal T}, d)$ equipped with a doubling Radon measure $m,$ satisfying $m({\mathcal T})=1$.
    More precisely, we assume that there exist $r_0>0$ and $C_d\geq 1$ such that for any $x\in\mathcal T$ and $0<r<r_0$
    \begin{eqnarray}\label{doubling}  m(B(x,r))&\leq& C_d m(B(x, \frac{r}{3})),\end{eqnarray}
\item[$*$] a right-continuous, strong Markov process $X=\left(X_t, \pr_x\right)_{t \geq 0, \, x \in \mathcal T}$ on $\mathcal T$ with symmetric and strictly positive
transition density
 $p(t,x,y)$ with respect to $m$ such that $\forall\,\, t,$ \linebreak $\int_{\mathcal T} p(t,x,x){\rm d}m(x)<\infty,$
\item[$*$] a potential profile $W:{\mathcal T}\times{\mathcal T}\to\mathbb{R}_+$ of finite range:
 a measurable function with support included in
$\{(x,y)\in\mathcal T\times\mathcal T: d(x,y)\leq a\epsilon\},$ where
$a>0, \epsilon>0$ are given, such that
\begin{equation} \label{eq:finiteW}
\mbox{for every} \quad t>0 \ \ \mbox{and} \ \  y \in {\mathcal T}, \quad \sup_{x \in {\mathcal T}} \ex_x \int_0^t W(X_s,y)\,{\rm  d}s < \infty.
\end{equation}

In applications, $a$ will be considered fixed and $\epsilon$ will tend to $0.$

\item[$*$] A finite collection of points $x_1,...,x_N\in \mathcal {T},$ called `obstacles' and the potential $V(x)$ defined as follows:
 \begin{equation}\label{vee} \mathcal{T}\ni x\mapsto
V(x)= \sum_{i=1}^N W(x,x_i). \end{equation}
\end{itemize}

We study the process $X$ perturbed by the potential $V$. Formally, we consider the Feynman-Kac semigroup $(T^V_t)_{t \geq 0}$ on $L^2({\mathcal T}, m)$ consisting of symmetric operators
$$
T^V_t f(x) = \ex_x \left[e^{-\int_0^t V(X_s)\,{\rm d}s } f(X_t) \right], \quad f \in L^2({\mathcal T}, m), \ \ t > 0.
$$
Denote by $\lambda_1^V$ the bottom of the spectrum of the  (positive definite) operator $-A^V$, where $A^V$ is the generator of this process.
of this semigroup.

\smallskip

\noindent We need to assume the following  conditions regarding
the process $X$ and the potential profile~$W.$

\begin{enumerate}
\item[{\bf (P1)}]  There exists  $c_0>0 $ such that $ \sup_{x,y\in\mathcal T} p(1,x,y)\leq c_0.$
\end{enumerate}

 Note that under {\bf(P1)} all the operators $T^V_t$, $t>0$, are compact in $L^2({\mathcal T}, m)$ and, in consequence, $\lambda_1^V$ becomes an isolated and simple eigenvalue.

The remaining assumptions are concerned with recurrence properties
of the process. We require that for any fixed $a,b,$ such that  $a\ll b$, $b\epsilon<r_0$ ($a\epsilon$ is the range of the potential profile)
and  $\delta>0$ there exist constants $\tau_0, c_1, c_2,$ $
c_3,\alpha,\kappa>0,$ $R>3$ and a nonincreasing function
$\phi:(0,r_0)\to (0,1]$  (the constants and $\phi$ do depend on $a,b$ but not on $\epsilon$) such that:
\begin{enumerate}
\item[{\bf (P2)}] for $x,y\in\mathcal{T}$ with $d(x,y)\leq b\epsilon$ one
has
\[\mathbf{P}_x[\tau_{B(y,10 (R-2) b\epsilon)}<\frac{\tau_0\epsilon^\alpha}{2}]<c_1;\]
\item[{\bf (P3)}]  when
$x,y\in\mathcal{T},$ and $d(x,y)\leq b\epsilon,$  then  $$
\mathbf{E}_x\big[{\rm e}^{-\int_0^{(\tau_0\epsilon^\alpha)/2} W( X_s,y)\,{\rm
d}s}\big]\leq 1-2c_1;$$
\item[{\bf (P4)}] for $x,y\in\mathcal T$ satisfying $d(x,y)\leq  r\epsilon\leq
r_0$ one has
\[\mathbf{P}_x\big[T_{B(y, b\epsilon)}
\leq\frac{\tau_0\epsilon^\alpha}{2}]\geq \phi(r);\]
\item[{\bf (P5)}] for
 $10 b\epsilon \leq \beta\leq\frac{r_0}{R},$
  any points $x,y\in\mathcal{T}$ with  $d(x,y)\leq\beta,$ and for
any compact subset $E\subset\mathcal{T}$ satisfying
$m(E\cap\overline{B}(y,\beta))\geq \delta/C_d \cdot
m(\overline{B}(y,\beta)) $ one has
\[\mathbf{P}_x[T_E<\tau_{B(y, R\beta)}]\geq c_2;\]
\item[{\bf (P6)}] for $r<r_0/3,$ $\rho>3r$ and  $x,y\in\mathcal{T}$ satisfying
$d(x,y)\leq r$ one has
\[\mathbf{P}_x\left[X_{\tau_{B(y,r)}}\notin B(y, \rho)\right]\leq c_3\left(\frac{r}{\rho}\right)^{\kappa}.\]
\end{enumerate}

Next, for given  $A>\delta>0$  define
\begin{equation}\label{ce-em-delta}
C(A,\delta)= {\rm e}^A\left(1+c_0(1+\frac{A}{\delta})\right),
\end{equation}
where $c_0$ is the constant from {\bf(P1)}. We require
that the number $R$ entering assumptions {\bf (P2), (P5)}
satisfies
\begin{equation}\label{erzero}
 \frac{c_3}{R^{\kappa}-1}\leq \frac{1}{8} \,C(A,\delta)^{-1}.
\end{equation} This can be done without loss of generality:
if {\bf (P2), (P5)} are satisfied with certain $R>0,$ then
they are satisfied for any $\tilde R>R.$

We  perform the following operation: for given  $b\gg a$ we want to replace the support
of the potential $V$ by a much larger set $\bigcup_{i=1}^N
\overline B(x_i,b\epsilon),$ and then we kill the initial process $X$ when it enters this bigger
set. We are interested in comparing  the principal eigenvalue   of this process and the principal eigenvalue $\lambda_1^V$ of the process $X$ perturbed by the potential $V$.
In general, we cannot enlarge every
obstacle  -- we keep only
those obstacles $x_i$ that are well-surrounded by other obstacles
(so-called {\em good obstacles}, see below).  Other obstacles will
be disregarded. Formally, we consider the sets
\begin{eqnarray}\label{thetabe}
\mathcal O_b =\bigcup_{x_i-{\rm good}} \overline{B}(x_i,b\epsilon), &&
\Theta_b=\mathcal T\setminus\mathcal{O}_b.
 \end{eqnarray}
 The process evolves now in the open set $\Theta_b$ and is killed when it enters $\mathcal O_b.$
Denote by $\lambda_1(b)$ the smallest eigenvalue of the operator $-A_b$, where $A_b$ is the generator

The distinction between `good' and `bad' points is made as
follows.

\begin{definition}\label{defi:good-bad}
 Suppose $b,\delta$ are given, and $R>0$ is the number from the assumptions above, satisfying \eqref{erzero}. Let $x_1,...,x_N$ be given obstacle points. Then $x_{i_0}$ is called a good obstacle point if for all balls
$C=B(x_{i_0}, 10b\epsilon R^l)$ one has
\begin{equation}\label{good}
m\left(\bigcup_{i=1}^N \overline{B}(x_i,b\epsilon)\cap C\right)\geq
\frac{\delta}{C_d}\, m(C),
\end{equation}
($C_d$ is the constant from (\ref{doubling})) for all $l=0,1,2,...,$
as long as $10b\epsilon R^l<r_0.$ Otherwise, $x_{i_0}$ is called a bad obstacle point.
\end{definition}
Formally speaking, this notion depends on $b,\delta,R,$ but for
the time being we do not incorporate these parameters into the
notation.

 Balls with centers at bad obstacle points sum up to a
set with small volume.

\begin{lemma}\emph{\cite[Lemma 1.3]{bib:Szn1}}\label{lembad}
\begin{equation}\label{badpoints}
m\big(\bigcup_{x_i-{\rm bad}}\overline{B}(x_i, b\epsilon)\big)\leq \delta.
\end{equation}
\end{lemma}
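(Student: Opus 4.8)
The plan is to follow Sznitman's original covering argument, adapted to the present metric-measure setting with the doubling property \eqref{doubling}. First I would recall what "bad" means: $x_{i_0}$ is bad if there exists at least one scale $l\geq 0$ with $10b\epsilon R^l<r_0$ at which the density condition \eqref{good} fails, i.e.\ $m\big(\bigcup_{i}\overline B(x_i,b\epsilon)\cap C\big)<\frac{\delta}{C_d}m(C)$ for $C=B(x_{i_0},10b\epsilon R^l)$. The idea is to use a Vitali-type covering of the bad obstacle balls by such "witnessing" balls $C$, in which the portion occupied by obstacle balls is small by definition, and then control the total measure.

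The key steps, in order, are as follows. First, to each bad point $x_{i_0}$ associate a witnessing ball $C(x_{i_0})=B(x_{i_0},10b\epsilon R^{l(x_{i_0})})$ at some scale where \eqref{good} fails; by enlarging the radius to the largest admissible one among the witnessing scales we may assume a maximality property among the chosen balls. Second, apply the basic covering lemma (the $5r$- or, here given the doubling constant is stated with a factor $3$, the $3r$-covering lemma) to extract a countable disjoint subfamily $C_1,C_2,\dots$ of the witnessing balls such that the balls $3C_j$ (or a fixed dilate) cover $\bigcup_{x_i-\mathrm{bad}}\overline B(x_i,b\epsilon)$ — here one uses that each bad obstacle ball $\overline B(x_i,b\epsilon)$ is contained in its own witnessing ball, which is comparable to one of the selected $C_j$. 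Third, estimate:
\begin{equation*}
m\Big(\bigcup_{x_i-\mathrm{bad}}\overline B(x_i,b\epsilon)\Big)
\leq \sum_j m(3C_j)
\leq C_d \sum_j m(C_j),
\end{equation*}
where the last inequality is precisely the doubling bound \eqref{doubling}. Fourth, on each selected (disjoint) witnessing ball $C_j$ the density of obstacle balls is small by the defining failure of \eqref{good}; summing and using disjointness,
\begin{equation*}
\sum_j m(C_j) \leq \frac{C_d}{\delta}\sum_j m\Big(\bigcup_i \overline B(x_i,b\epsilon)\cap C_j\Big)
\leq \frac{C_d}{\delta}\, m\Big(\bigcup_i \overline B(x_i,b\epsilon)\Big)
\leq \frac{C_d}{\delta}.
\end{equation*}
Wait — this gives $m(\bigcup_{\mathrm{bad}})\leq C_d^2/\delta$, which is the wrong direction; so the correct bookkeeping must instead bound the bad part \emph{inside} the good-density complement. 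The right way, following Sznitman, is to note that the failure of \eqref{good} at a witnessing ball forces a definite \emph{deficit} of obstacle-ball measure there, and one iterates this over a maximal disjoint family to absorb the bound, ultimately yielding the clean estimate $m(\bigcup_{x_i-\mathrm{bad}}\overline B(x_i,b\epsilon))\leq\delta$ after optimizing over the free choice of $R$ via \eqref{erzero}.

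The main obstacle I expect is exactly this last bookkeeping step: one cannot naively sum, because the total obstacle-ball measure may itself be of order $1$, so a crude union bound loses. The trick — and the heart of Sznitman's Lemma~1.3 — is a clever iteration/exhaustion: order the bad points, at each stage pick a bad point not yet covered, take its witnessing ball, and exploit that \eqref{good} fails there to "pay" for the newly covered bad balls out of a budget that telescopes; the doubling constant $C_d$ and the geometric scales $R^l$ are arranged (via \eqref{erzero}) so the geometric series converges and the total is at most $\delta$. Since the statement is quoted verbatim from \cite[Lemma 1.3]{bib:Szn1}, I would either cite that proof directly or reproduce the iteration with the metric-space modifications: replacing Euclidean volume comparisons by \eqref{doubling} and Euclidean balls by metric balls throughout, which is routine once the doubling property is in hand.
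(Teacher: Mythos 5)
Your instincts are right that this is a Vitali-type covering argument using the doubling property and the failure of \eqref{good} at a witnessing scale, and you correctly spotted that your own bookkeeping gives a bound of order $C_d^2/\delta$ rather than $\delta$. However, the speculation that the fix requires an ``iteration/exhaustion'' with \eqref{erzero} entering is a red herring: the lemma is proven by a one-shot covering argument and \eqref{erzero} plays no role here (it is needed only in Theorem~\ref{th:compare}). The gap in your plan is which balls to feed into the Wiener $3r$-covering lemma.

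The clean argument runs as follows. For each bad $x_{i_0}$ pick a witnessing scale $\rho_{i_0} = 10 b\epsilon R^{l(x_{i_0})} < r_0$ at which \eqref{good} fails, and set $C'(x_{i_0}) := B(x_{i_0}, \rho_{i_0}/3)$. Since $\rho_{i_0} \geq 10 b\epsilon$, the ball $C'(x_{i_0})$ still contains $\overline B(x_{i_0}, b\epsilon)$. Apply the $3r$-covering lemma to the finite family $\{C'(x_{i_0})\}_{x_{i_0}\text{ bad}}$ to get disjoint $C'_1, \dots, C'_k$ whose $3$-dilates cover all of them; the crucial point is that the $3$-dilate $3C'_j = B(x_{j}, \rho_{j})$ \emph{is} the witnessing ball $C_j$, on which the density bound holds. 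Then
\begin{align*}
m\Big(\bigcup_{x_i\text{-bad}} \overline B(x_i, b\epsilon)\Big)
&\leq \sum_j m\Big(\bigcup_{i=1}^N \overline B(x_i, b\epsilon)\cap C_j\Big)
< \sum_j \frac{\delta}{C_d}\, m(C_j) \\
&\leq \sum_j \frac{\delta}{C_d}\, C_d\, m(C'_j)
= \delta \sum_j m(C'_j) \leq \delta,
\end{align*}
where the second inequality is the failure of \eqref{good} on each $C_j$, the third is the doubling property \eqref{doubling} applied with $r=\rho_j<r_0$, and the last uses disjointness of the $C'_j$ together with $m(\mathcal T)=1$. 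In your version you applied Vitali directly to the witnessing balls $C_j$, so the cover ran over $3C_j$ where neither the density bound nor the doubling estimate lines up in the useful direction; shrinking by the factor $3$ before the covering lemma is exactly what makes the constants cancel, with no iteration needed.
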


We will employ the following theorem, comparing  the principal eigenvalue of the potential problem with  the principal eigenvalue of the obstacle problem.

\begin{theorem} \emph{\cite[Theorem A.1]{bib:KK-KPP2}}\normal\label{th:compare}  Let the numbers
$A>\delta>0,$ $ b\gg a$ be given.
Assume that the process $X_t$ is a discontinuous c\`adl\`ag process satisfying {\bf (P1)} -- {\bf (P6)}, with $R$ satisfying (\ref{erzero}).

   Then there exists $\epsilon_0=
\epsilon_0(a,b,\delta,A,R,c_0,c_1,c_2,c_3,\alpha,\kappa)$  such that for any $\epsilon<\epsilon_0$ ($b \epsilon$ is
the  radius of obstacles in (\ref{thetabe})) one has
\begin{equation}\label{lambda}
\lambda_1(b)\wedge A\leq \lambda_1^V\wedge
A+\delta.
\end{equation}
\end{theorem}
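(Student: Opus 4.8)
The plan is to use the variational description of $\lambda_1(b)$: under \textbf{(P1)} the killed semigroup on $\Theta_b$ is compact, so $\lambda_1(b)=\inf\{\mathcal E(f,f):f\in L^2(\mathcal T,m),\ \mathrm{supp}\,f\subset\Theta_b,\ \|f\|_2=1\}$, where $\mathcal E$ is the Dirichlet form of $X$ and the soft-obstacle form is $\mathcal E^V(f,f)=\mathcal E(f,f)+\int_{\mathcal T}Vf^2\,{\rm d}m$. It therefore suffices to exhibit one admissible test function whose Rayleigh quotient is $\leq\lambda_1^V\wedge A+\delta$, and we build it from the ground state $\varphi=\varphi_1^V\geq 0$, $\|\varphi\|_2=1$, of $-A^V$. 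First reduce to $\lambda_1^V<A$ (otherwise $\lambda_1(b)\wedge A\leq A=\lambda_1^V\wedge A$ and \eqref{lambda} is trivial). From the fixed-point identity $\varphi=e^{\lambda_1^V}T^V_1\varphi$, the domination $|T^V_1 g|\leq T_1|g|$ (valid since $V\geq 0$), \textbf{(P1)}, and $m(\mathcal T)=1$, one gets the uniform bound $\|\varphi\|_\infty\leq c_0 e^{\lambda_1^V}\|\varphi\|_1\leq c_0 e^{A}$; this is the reason for truncating at level $A$, and it is this bound, together with the geometry of the cutoff below, that is packaged into the constant $C(A,\delta)$ of \eqref{ce-em-delta}.

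The heart of the proof is to show that $\varphi$ is strongly suppressed on a neighbourhood of the enlarged good-obstacle set $\mathcal O_b$ from \eqref{thetabe}. Fix a good obstacle point $x_{i_0}$ and $x$ with $d(x,x_{i_0})\leq b\epsilon$, and write $\varphi(x)=e^{\lambda_1^V t}\,\mathbf E_x\big[e^{-\int_0^t V(X_s)\,{\rm d}s}\,\varphi(X_t)\big]$ with $t=\tau_0\epsilon^\alpha/2$. Splitting according to whether the process hits the support of $V$ before leaving a ball of radius of order $10(R-2)b\epsilon$: by the good property at the smallest scale (Definition~\ref{defi:good-bad}, \eqref{good}) this support has enough $m$-mass nearby, so \textbf{(P2)}, \textbf{(P4)} and \textbf{(P5)} force a hit with probability bounded below, and on that event \textbf{(P3)} yields a multiplicative loss $\leq 1-2c_1$. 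Since the good property holds at \emph{every} scale $10b\epsilon R^\ell$, this estimate can be iterated over scales; \textbf{(P6)} controls the overshoot each time the process leaves the corresponding ball, and the choice \eqref{erzero} of $R$ makes the resulting overshoot series small relative to $C(A,\delta)$, so the iteration closes. The conclusion is a bound $\varphi\leq\eta$ on $\mathcal O_b$ and on a $b\epsilon$-collar of it, with $\eta$ as small as we wish once $\epsilon<\epsilon_0$; in particular $\int_{\mathcal O_b}\varphi^2\,{\rm d}m$ and the analogous integral over the collar are arbitrarily small. Lemma~\ref{lembad} guarantees that the discarded bad obstacles occupy $m$-volume $\leq\delta$, hence are negligible.

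Now choose a $[0,1]$-valued cutoff $\chi$ equal to $0$ on $\mathcal O_b$, equal to $1$ outside the $b\epsilon$-collar of $\mathcal O_b$, and varying at scale $b\epsilon$, and set $\psi=\chi\varphi$, so $\mathrm{supp}\,\psi\subset\Theta_b$. Then $\|\psi\|_2^2\geq 1-\int_{\mathcal O_b\cup\text{collar}}\varphi^2\,{\rm d}m=1-o(1)$, while $\mathcal E(\varphi,\varphi)=\lambda_1^V-\int_{\mathcal T}V\varphi^2\,{\rm d}m\leq\lambda_1^V$, so $\mathcal E(\psi,\psi)\leq\lambda_1^V+\text{(cutoff error)}$. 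For $\alpha=2$ the cutoff error is $\int|\nabla\chi|^2\varphi^2\,{\rm d}m$, bounded by $(b\epsilon)^{-2}\|\varphi\|_\infty^2$ times the volume of the collar; for $\alpha\in(0,2)$ one estimates instead $\int\!\!\int(\psi(x)-\psi(y))^2\nu(x,y)\,{\rm d}m(x)\,{\rm d}m(y)$ over pairs with one endpoint near $\mathcal O_b$, using $\varphi\leq\eta$ there and \textbf{(P6)} to control long jumps landing in $\mathcal O_b$. The doubling property \eqref{doubling} keeps the number of coarse cells meeting the collar, and hence all these errors, bounded uniformly in $\epsilon$; taking $\epsilon<\epsilon_0$ small enough makes the total error below $\delta$, and dividing by $\|\psi\|_2^2$ gives $\lambda_1(b)\leq\mathcal E(\psi,\psi)/\|\psi\|_2^2\leq\lambda_1^V+\delta$, whence \eqref{lambda} after intersecting with $A$.

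The main obstacle is the nonlocal cutoff: unlike for the Laplacian, multiplying $\varphi$ by $\chi$ need not decrease the jump form, so one genuinely needs the \emph{pointwise} decay $\varphi\leq\eta$ on and near $\mathcal O_b$ rather than mere $L^2$-smallness, and controlling jumps that land deep inside $\mathcal O_b$ is precisely what forces the full recurrence/overshoot package \textbf{(P2)}--\textbf{(P6)} and the multi-scale notion of a good point. The second delicate issue is uniformity in $\epsilon$: every error must be bounded by quantities depending only on $a,b,\delta,A,R$ and $c_0,c_1,c_2,c_3,\tau_0,\alpha,\kappa$, never on $\epsilon$ or on the obstacle configuration, which is exactly what the $A$-truncation, the bound $\|\varphi\|_\infty\leq c_0e^A$, Lemma~\ref{lembad} and the doubling hypothesis are designed to secure.
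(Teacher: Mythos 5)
The paper does not prove Theorem~\ref{th:compare}; it is cited verbatim from \cite[Theorem A.1]{bib:KK-KPP2}, so there is no in-paper argument to compare against. That said, your reconstruction follows the Sznitman-style enlargement-of-obstacles route that underlies the cited result: an a priori $L^\infty$-bound on the ground state $\varphi$ via the fixed-point identity and \textbf{(P1)} (the reason for the $A$-truncation and for the shape of $C(A,\delta)$ in \eqref{ce-em-delta}); multi-scale suppression of $\varphi$ near good obstacles, iterating \textbf{(P2)}--\textbf{(P6)} over the radii $10b\epsilon R^\ell$, with the overshoot series closed by the choice \eqref{erzero}; and a cutoff/Rayleigh-quotient comparison in which bad obstacles are discarded using Lemma~\ref{lembad}. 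You also correctly flag the genuine new obstruction in the nonlocal case — multiplication by a cutoff is not monotone for the jump form, so one needs \emph{pointwise} smallness of $\varphi$ on and near $\mathcal O_b$ rather than $L^2$-smallness, and must control long jumps that land inside $\mathcal O_b$ via \textbf{(P6)}. This matches the structure of Sznitman's original proof for $\alpha=2$ and of its nonlocal adaptation in \cite{bib:KK-KPP2}.

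One small detail worth tightening: the good-point condition \eqref{good} involves the union of \emph{all} obstacle balls $\overline B(x_i,b\epsilon)$, good and bad alike, and \textbf{(P5)} is applied with $E$ equal to that full union. The good/bad distinction does not enter the hitting estimate; it only governs which points are promoted to hard obstacles when forming $\mathcal O_b$ in \eqref{thetabe}. Your phrasing ("this support has enough $m$-mass nearby") is a bit loose on this point, though the rest of the iteration is described correctly. Also, note that the theorem as stated in the paper is restricted to a \emph{discontinuous} c\`adl\`ag process, i.e.\ $\alpha\in(0,2)$; the $\alpha=2$ branch of your cutoff-error estimate is Sznitman's original setting and is not covered by the reference cited here.
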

\

\subsection{Continuation of the proof of Theorem \ref{th:upper-stable}} \label{sec:cont}
For a fixed  configuration $q_{\mathbf i}(\omega),$  we have the following.
  The state-space for the Sznitman's theorem is $\mathcal T=\mathcal T_m,$  the measure is the normalized Lebesgue measure on $\mathcal T_m,$ denoted $|\cdot|_m,$ the process
is the stable process  $(Z_t^m)_{t \geq 0},$  the potential profile is $W_K(x,y)=K^\alpha \kappa W(Kx-Ky),$ the obstacle points -- those
 points $\mathbf j=\frac{\mathbf i}{K}\in\mathcal T_m$ for which $q_{\mathbf i}^{(\kappa)}=\kappa.$  Denote the obstacle points (on the torus $\mathcal T_m)$  by $x_1,...,x_N.$ The range of this profile is equal to $\frac{a}{K},$ and we put $\epsilon=\frac{1}{K}$ (and we will use both $K,\epsilon$ below).

The potential, given by \eqref{eq:pot-scaled}, can be written as
\[\tilde{V}_{K,m}^{\kappa}(x)= \sum_{\mathbf j=\frac{\mathbf i}{K}  \mbox{\tiny  - an obstacle}} K^\alpha \kappa W(K(x-\mathbf j)).\]
denoted $V_\epsilon$ for short.
As in \eqref{eq:lambda-scal}, $\lambda_1^{m,\widetilde V^\kappa_{K,m}}=:\lambda_1^{V_{\epsilon}}$ is the principal eigenvalue of the stable
	semigroup on $\mathcal T_m$ with the potential $\widetilde V^\kappa_{K,m}=:V_\epsilon$
	-- this is the quantity we need to estimate.

		Fix two control levels  $A\gg 0$ and $\delta>0.$ Let $b=\beta\sqrt d \gg a ,$ $\beta\in\mathbb Z,$  be fixed  $(a$ is the range of the unscaled profile function $W$), and let  $R\gg 0$  be the number arising from {\bf (P2), (P5)},
satisfying \eqref{erzero}.
	
The good and bad obstacle points are defined as in the previous section.

	As before, we consider the sets
	\begin{eqnarray}\label{thetabe_2}
	\mathcal O_b =\bigcup_{x_i-{\rm good}} \overline{B}(x_i,b\epsilon), &&
	\Theta_b=\mathcal T_m\setminus\mathcal{O}_b
	\end{eqnarray}
and let the process evolve in $\Theta_b$ until it hits $\mathcal O_b.$ We denote by $\lambda_1(b)(=\lambda_1^m(\Theta_b))$ the smallest eigenvalue of the operator $-A_b$, where $A_b$ is the generator of this process.

Assumptions {\bf (P1)} -- {\bf (P6)} except for {\bf (P3)} were verified in \cite[Proposition 3]{bib:Kow}. To see {\bf (P3)} {we  argue as in \cite[Proposition 4.3]{bib:KK-KPP2}.} Consequently, we  now apply Theorem \ref{th:compare}, so that for given  $A>\delta>0,$  there
		exists $\epsilon_0=\epsilon_0(X, a, b, A,,R, \delta, \kappa)$ such that for any $\epsilon<\epsilon_0$  (or: there exists $K_0,$ depending on the same set of parameters, that for any $K>K_0$) one has
		\begin{equation}\label{eq:szn1}
\lambda_1(b)\wedge A \leq \lambda_1^{V_\epsilon}\wedge A+\delta.
\end{equation}
We recall that the principal eigenvalues in this statement pertain to the $\alpha-$stable process on the torus $\mathcal T_m.$

For $K>K_0$ described above, \eqref{eq:szn1} holds, and so the main estimate, for $M=Km,$ continues as  \eqref{eq:lambda-scal}
	\begin{eqnarray*}
	{\rm e}^{-(t-1)\lambda_1^{M,\widetilde{V}_{M}^{\kappa}}} & = & {\rm e}^{-\frac{(t-1)}{K^\alpha}\, \lambda_1^{m,\widetilde{V}_{K,m}^{\kappa}}}
		= {\rm e}^{-\frac{(t-1)}{K^\alpha}\, \lambda_1^{V_\epsilon}} \leq {\rm e}^{-\frac{(t-1)}{K^\alpha}\,(\lambda_1(b)\wedge A -\delta)}.
	\end{eqnarray*}
	
\

	To proceed, we discretize the set $\mathcal O_b.$ To this end, we chop the `sides' of the torus $\mathcal T_m$  into $\frac{m\sqrt d}{b\epsilon}= \frac{Km\sqrt d}{b}$  parts, which yields $ ( \frac{Km\sqrt d}{b})^d$ small boxes, with `sidelength' $b\epsilon/\sqrt d=b/(K\sqrt d)$ each, i.e. the diameter of those boxes is $b\epsilon.$ We do this in the manner that keeps the lattice points $\frac{\mathbf i}{K}$ inside the boxes. We can visualize this procedure as follows: we identify the torus with the box $[-\frac{1}{2K}, m-\frac{1}{2K})^d,$ and then chop the sides of this box into $\frac{Km\sqrt d}{b}=\frac{Km}{\beta}$ parts. Some of these boxes will now be  removed.
	
Let $U_{b,K}= U_{b,\epsilon}$ be the open subset of $\mathcal T_m$ obtained by removing closed boxes
that received some of the obstacle points. Similarly, the set $\widehat{U}_{b,K}=\widehat{U}_{b,\epsilon}\subset \mathcal T_m$ is   obtained by removing those closed boxes that received some good obstacle points.
	Finally,
let $\mathcal U_{b,\epsilon}$ be the set of all possible configurations of the sets $U_{b,\epsilon}, \widehat{U}_{b,\epsilon}$ (i.e. the set of all pairs $(U,\widehat U)$, $U, \widehat U \subset \mathcal T_m$, composed of boxes of size $b\epsilon$ that realize the random sets $U_{b,\epsilon}, \widehat{U}_{b,\epsilon}$).
Observe that if $(U,\widehat{U})\in \mathcal U_{b,\epsilon}, $ then $U\subset \widehat{U},$ and all the obstacles in $\widehat{U} \setminus U$  are bad. Clearly, if $\Delta $ a box with diameter $b\epsilon$ and $x\in \Delta,$ then $\Delta\subset B(x,b\epsilon).$ Therefore, if $U_{b,\epsilon}=U$ and $\widehat{U}_{b,\epsilon}=\widehat{U}$, then
 $$\Theta_b  \subset  U  \quad \mbox{ and } \quad \widehat{U} \setminus U \subset \bigcup_{x_i-\mbox{\tiny bad}} \overline{B(x_i,b\epsilon)}.$$
From
	 Lemma \ref{lembad} it follows that
	\begin{equation}
	|\widehat {U}|_m\leq |U|_m+  \delta,
	\end{equation}
	i.e.
	\begin{equation}\label{eq:measure-of-u}
|\widehat {U}|\leq |U|+ m^d \delta.
\end{equation}
For $(U,\widehat{U})\in\mathcal U_{b,\epsilon}$ we can now estimate $\mathbb Q[U_{b,\epsilon}=U, \widehat{U}_{b,\epsilon}=\widehat{U}].$
Since we have assumed that $b/\sqrt d=\beta\in\mathbb Z,$ the set $U$ consists of $|U|\cdot \epsilon^{-d}$ boxes of size $\epsilon= 1/K$ each, with precisely one lattice point in its center.

The condition $U=U_{b,\epsilon}$ means that  no lattice point inside   $U$ is an  obstacle , i.e. for all $\frac{\bf i}{K}\in U$ one has $q_{{\bf i}}(\omega)\leq \kappa.$ The probability of	 this event is
	\begin{eqnarray}\label{eq:est1}
\mathbb Q[U_{b,\epsilon}=U, \widehat{U}_{b,\epsilon}=\widehat{U}]&\leq&
	\mathbb Q\left[\mbox{no obstacles in $U$}\right]\nonumber\\
 &=& \mathbb Q[q\leq \kappa]^{K^d|U|}=
	(F_q(\kappa))^{K^d|U|}\leq (F_q(\kappa))^{K^d(|\widehat{U}|-m^d\delta)}\nonumber\\
	&=& {\rm e}^{-\ln(\frac{1}{F_q(\kappa)})K^d(|\widehat{U}|-m^d\delta)}.
	\end{eqnarray}

The cardinality of the family $\mathcal U_{b,\epsilon},$ denoted by $N_{b,\epsilon}$, satisfies
	\begin{equation}\label{eq:card-n}
	N_{b,\epsilon}\leq (2^{(\frac{Km\sqrt d}{b})^d})^2=2^{2(\frac{Km\sqrt d}{b})^d}.
	\end{equation}

\noindent Estimates \eqref{eq:measure-of-u}, \eqref{eq:est1}, \eqref{eq:card-n} account for the following chain of inequalities:
	\begin{eqnarray}\label{eq:esti-1}
	&&\mathbb E^{\mathbb Q}\left({\rm e}^{-\frac{(t-1)}{K^\alpha}\left[(\lambda_1(b)\wedge A)-\delta\right]}\right)
	\nonumber\\
	&\leq & \sum_{(U,\widehat{U})\in \mathcal U_{b,K}} \mathbb E^{\mathbb Q}\left[{\rm e}^{-\frac{(t-1)}{K^\alpha}[\lambda_1^m(\widehat{U})\wedge A-\delta]}
	\mathbf 1_{\{U_{b,\epsilon}=U, \widehat{U}_{b,\epsilon}=\widehat{U}\}}\right]\nonumber\\
	&=&
	\sum_{(U,\widehat{U})\in \mathcal U_{b,\epsilon}} \mathbb{\rm e}^{-\frac{(t-1)}{K^\alpha}[\lambda_1^m(\widehat{U})\wedge A-\delta]}
	\qpr\left[U_{b,\epsilon}=U, \widehat{U}_{b,\epsilon}=\widehat{U}\right]\nonumber\\
&\leq& \sum_{(U,\widehat{U})\in \mathcal U_{b,\epsilon}}  \mathbb{\rm e}^{-\frac{(t-1)}{K^\alpha}[\lambda_1^m(\widehat{U})\wedge A-\delta]-\ln\left(\frac{1}{F_q(\kappa)}\right)K^d(|\widehat{U}|-m^d\delta)}
\nonumber\\
&\leq &	2^{2(\frac{Km\sqrt d}{b})^d}\exp\left\{-\inf_{\widehat{U}\subset \mathcal T_m}\left[\frac{(t-1)}{K^\alpha}[\lambda_1^m(\widehat{U})\wedge A-\delta]+\ln(\frac{1}{F_q(\kappa)})K^d(|\widehat{U}|-m^d\delta)\right]\right\},
	\end{eqnarray}
	where the infimum runs over all possible configurations of $\widehat{U}.$

	Since for $a,b,c>0$ we have $a\wedge b+c \geq (a+c)\wedge b,$ we get that the quantity in \eqref{eq:esti-1}
	is not smaller that
	\[
	 2^{2(\frac{Km\sqrt d}{b})^d}\exp\left\{-\left(\inf_{\widehat{U}\subset \mathcal T_m}\left[\frac{(t-1)}{K^\alpha}\lambda_1^m(\widehat{U}) +\ln(\frac{1}{F_q(\kappa)})K^d|\widehat{U}|\right]\right)\wedge A + \delta\left(\frac{(t-1)}{K^\alpha} +m^d \ln (\frac{1}{F_q(\kappa)})\right)\right\}.
	\]	
We now work with the infimum in the formula above.  If we replace the condition
$\widehat U\subset \mathcal T_m$ with $\widehat U\in \mathcal U_m$ (the collection of all open subsets of $\mathcal T_m),$  it can only get diminished, i.e.
\[\inf_{\widehat{U}\in \mathcal T_m}\left[\frac{(t-1)}{K^\alpha}\lambda_1^m(\widehat{U})
+\ln(\frac{1}{F_q(\kappa)})K^d|\widehat{U}|\right]
\geq \inf_{\widehat{U}\subset \mathcal U_m}\left[\frac{(t-1)}{K^\alpha}\lambda_1^m(\widehat{U})
+\ln(\frac{1}{F_q(\kappa)})K^d|\widehat{U}|\right].
\]
Our bounds hold for any fixed $A,b,\delta,$ as long as $K$ is large enough. We now will make $K$ depend on $t.$
		If
	\begin{equation}\label{eq:choice-of-K}
	K_0\leq \left(\frac{(t-1)}{-\ln F_q(\kappa)}\right)^{\frac{1}{d+\alpha}}< (K_0+1),
	\end{equation}
	then
	we choose $K=K_0$ in the bounds above.
	
	 We introduce and fix  an additional control parameter $\zeta\in(0,1).$
	There exists $t_0=t_0(\zeta)$ such that for $t>t_0$ we have $\left(\frac{K_0}{K_0+1}\right)^d \geq 1-\zeta,$ so that	for $t\geq t_0$ we have
	\begin{eqnarray*}
&&
	\inf_{ \widehat{U}\subset \mathcal U_m}\left[\frac{(t-1)}{K_0^\alpha}\lambda_1^m(\widehat{U})
	+\ln(\frac{1}{F_q(\kappa)})K_0^d|\widehat{U}|\right]\\
& \geq & \inf_{ \widehat{U}\subset \mathcal U_m}\left[(t-1)^{\frac{d}{d+\alpha}}\left(\ln\frac{1}{F_q(\kappa)}\right)^{\frac{\alpha}{d+\alpha}}\left(
	\lambda_1^m(\widehat U) + \left(\frac{K_0}{K_0+1}\right)^d  |\widehat U| \right)\right]\\
	&\geq &   (t-1) ^{\frac{d}{d+\alpha}}\left(\ln\frac{1}{F_q(\kappa)}\right)^{\frac{\alpha}{d+\alpha}}	 \inf_{ \widehat{U}\subset \mathcal U_m}\left[\lambda_1^m(\widehat U) +
	(1-\zeta) |\widehat U|\right].
	\end{eqnarray*}
	Consequently, as long as $t$ is large enough,
	\begin{eqnarray*}
L(t) &\leq& C_1 2^{2(\frac{\sqrt d m}{b})^d ((t-1)/-\ln F_q(\kappa))^{\frac{d}{d+\alpha}}}\\
 && \times\exp\left\{ -\left( (t-1)^{\frac{d}{d+\alpha}} \left(\ln\frac{1}{F_q(\kappa)}\right)^{\frac{\alpha}{d+\alpha}} \inf_{U\in\mathcal U_m} [\lambda_1^m(U)+(1-\zeta) |U|]\right) \wedge A\right.\\
  &&\left. \phantom{paaaaaaaa}+\delta \left({(t-1)^{\frac{d}{d+\alpha}}}\left(\ln\frac{1}{F_q(\kappa)}\right)^{\frac{\alpha}{d+\alpha}} +m^d\ln\frac{1}{F_q(\kappa)} \right)\right\}. 	
	\end{eqnarray*}
It follows
\[\limsup_{t\to\infty} \frac{\ln L(t)}{t^{\frac{d}{d+\alpha}}\left(\ln\frac{1}{F_q(\kappa)}\right)^{\frac{\alpha}{d+\alpha}}}\leq 2(\frac{m\sqrt d}{b})^d\ln 2 - \inf_{U\in\mathcal U_m} [\lambda_1^m(U)+(1-\zeta) |U|] \wedge A +{\delta}.
\]
Now we let $b\to\infty, A\to\infty, \delta\to 0,$ which results in
\[\limsup_{t\to\infty} \frac{\ln L(t)}{t^{\frac{d}{d+\alpha}}\left(\ln\frac{1}{F_q(\kappa)}\right)^{\frac{\alpha}{d+\alpha}}}\leq -\inf_{U\in\mathcal U_m} [\lambda_1^m(U)+(1-\zeta) |U|],
\]
for any $m>0,$ $\zeta\in(0,1).$
Therefore, by Lemma \ref{lem:sup},
\begin{eqnarray*}
\limsup_{t\to\infty} \frac{\ln L(t)}{t^{\frac{d}{d+\alpha}}\left(\ln\frac{1}{F_q(\kappa)}\right)^{\frac{\alpha}{d+\alpha}}}& \leq&  -\sup_m\inf_{U\in\mathcal U_m} \left[\lambda_1^{m}(U)+(1-\zeta) |U|\right] \\
&\leq&
-\inf_{U\in\mathcal U} \left[\lambda_1(U)+(1-\zeta) |U|\right]
=- (1-\zeta)^{\frac{\alpha}{d+\alpha}} C_{d,\alpha},
\end{eqnarray*}
where $\mathcal U$ is the collection of all open subsets of $\R^d.$
	As the last step, we pass to the limit $\zeta\to 0$ and we obtain \eqref{eq:upper-statement}.
	The proof is complete.
\hfill$\Box$

\subsection{Formula for the rate function}
In the concluding lines of the argument above we have used the following lemma, that can be tracked down to the Donsker-Varadhan paper \cite{bib:Don-Var}.
See also \cite[Lemma 3.3]{bib:Szn1} for its Brownian motion counterpart.

\begin{lemma}\label{lem:sup}
Fix $\alpha\in(0,2]$. Let $\lambda_1(U)$ (resp. $\lambda_1^m(U),$ $m=1,2,...$) be the principal eigenvalue of the operator $(-\Delta)^{\alpha/2}$  (resp. the operator $-A^m$, where $-A^m$ is the generator of the stable semigroup on $\mathcal T_m$) constrained to an open set $U \subset \R^d$ (resp. $U\subset \mathcal T_m),$  with Dirichlet conditions on $U^c$ when $\alpha\in(0,2)$ and on $\partial U$ when $\alpha=2$ (cf. \eqref{eq:def=lambdaMU}). Denote by $\mathcal U$ (resp. $\mathcal U_m$) the collection
of all open subsets of $\R^d$ (resp. $\mathcal T_m$). Let $\nu>0$ be a given number.
Then
\begin{equation}\label{eq:sup}
\sup_m\inf_{U\in \mathcal U_m}[\lambda_1^{m}(U)+\nu |U|] \geq \inf_{U\in \mathcal U}[\lambda_1(U)+\nu |U|]= C_{d,\alpha}\nu^{\frac{\alpha}{d+\alpha}},
\end{equation}
with $C_{d,\alpha}$ given by \eqref{eq:opt-constant}.
\end{lemma}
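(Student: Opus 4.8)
The plan is to treat the two assertions in \eqref{eq:sup}---the equality $\inf_{U\in\mathcal U}[\lambda_1(U)+\nu|U|]=C_{d,\alpha}\nu^{\frac{\alpha}{d+\alpha}}$ and the inequality $\sup_m\inf_{U\in\mathcal U_m}[\cdots]\ge\inf_{U\in\mathcal U}[\cdots]$---separately. For the equality I would invoke the Faber--Krahn inequality for $(-\Delta)^{\alpha/2}$: among open sets of prescribed Lebesgue measure the Euclidean ball minimises the principal Dirichlet eigenvalue (classical for $\alpha=2$; for $\alpha\in(0,2)$ a consequence of the Pólya--Szegő inequality for the Gagliardo seminorm / rearrangement of the eigenfunction). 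Hence the infimum over $\mathcal U$ is attained along balls, and by the scaling $\lambda_1(B_r)=\lambda_d^{(\alpha)}r^{-\alpha}$, $|B_r|=\omega_d r^d$ it becomes the one-variable problem $\inf_{r>0}\big(\lambda_d^{(\alpha)}r^{-\alpha}+\nu\omega_d r^d\big)$, with minimiser $r_\ast=\big(\alpha\lambda_d^{(\alpha)}/(d\nu\omega_d)\big)^{1/(d+\alpha)}$; at $r_\ast$ the two summands are $\lambda_d^{(\alpha)}r_\ast^{-\alpha}$ and $\tfrac{\alpha}{d}\lambda_d^{(\alpha)}r_\ast^{-\alpha}$, so the value equals $\tfrac{d+\alpha}{d}\lambda_d^{(\alpha)}r_\ast^{-\alpha}$, and substituting $r_\ast^{-\alpha}=\big(d\nu\omega_d/(\alpha\lambda_d^{(\alpha)})\big)^{\alpha/(d+\alpha)}$ and simplifying gives exactly $C_{d,\alpha}\nu^{\frac{\alpha}{d+\alpha}}$ with $C_{d,\alpha}$ as in \eqref{eq:opt-constant}.

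For the inequality, since $\sup_m a_m\ge\liminf_m a_m$ it suffices to prove $\liminf_{m\to\infty}\inf_{U\in\mathcal U_m}[\lambda_1^m(U)+\nu|U|]\ge A:=\inf_{U\in\mathcal U}[\lambda_1(U)+\nu|U|]$. Suppose not: there are $\varepsilon>0$ and $U_m\in\mathcal U_m$, $m\to\infty$, with $\lambda_1^m(U_m)+\nu|U_m|\le A-\varepsilon$, so $|U_m|\le A/\nu$ and $\lambda_1^m(U_m)\le A$ uniformly in $m$. Let $\varphi_m\ge0$, $\|\varphi_m\|_2=1$, be the ground state of $\lambda_1^m(U_m)$; replacing $U_m$ by the connected component carrying $\varphi_m$ keeps all bounds, so assume $U_m$ connected. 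From $\varphi_m=e^{\lambda_1^m(U_m)}T_1^{m,U_m}\varphi_m$, the estimate $\sup_{x,y}p_m(1,x,y)\le\sup_x p_m(1,x,x)\le C_1$ (the first step by Cauchy--Schwarz applied to $p_m(1,x,y)=\int p_m(\tfrac12,x,z)p_m(\tfrac12,z,y)\,\mathrm dz$, the second from \eqref{eq:diag-on-est}), and $\|\varphi_m\|_1\le|U_m|^{1/2}$, one gets $\|\varphi_m\|_\infty\le e^{A}C_1(A/\nu)^{1/2}=:C_2$ uniformly in $m$. Identifying $\mathcal T_m$ with a fundamental cube $F$, the zero-extension $\widetilde\varphi_m$ of $\varphi_m$ to $\R^d$ is supported in one copy of $U_m$, has $\|\widetilde\varphi_m\|_2=1$, lies in the form domain of $(-\Delta)^{\alpha/2}$, and a short computation using $\nu_m(x,y)=\sum_{j\in\Z^d}\nu(x-y-mj)$ and the symmetry of $\nu$ yields, with $\mathcal E_{\R^d}$ the Dirichlet form of $(-\Delta)^{\alpha/2}$ on $\R^d$,
\[
\mathcal E_{\R^d}(\widetilde\varphi_m,\widetilde\varphi_m)=\mathcal E^m(\varphi_m,\varphi_m)+E_m,\qquad
E_m:=\sum_{j\in\Z^d\setminus\{0\}}\int_F\!\!\int_F\varphi_m(x)\varphi_m(y)\,\nu(x-y-mj)\,\mathrm dx\,\mathrm dy\ \ge\ 0.
\]
Using $\mathcal E_{\R^d}(\widetilde\varphi_m,\widetilde\varphi_m)\ge\lambda_1(U_m)$, the Euclidean Faber--Krahn bound $\lambda_1(U_m)\ge\lambda_1(B_{|U_m|})$, and $\lambda_1(B_{|U_m|})+\nu|B_{|U_m|}|\ge A$, we get $A\le\lambda_1^m(U_m)+\nu|U_m|+E_m\le A-\varepsilon+E_m$, i.e.\ $E_m\ge\varepsilon$ for all $m$.

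The contradiction---and the main obstacle---is to show $E_m\to0$, which reduces to a \emph{localisation} statement: for $m$ large, after a torus translation $\supp\varphi_m$ lies in a set of $\ell^\infty$-diameter $<m/2$ (indeed of diameter bounded independently of $m$). Granting this and choosing $F$ accordingly, for $x,y\in\supp\widetilde\varphi_m$ and $j\neq0$ one has $|x-y-mj|\ge m(|j|_\infty-\tfrac12)\ge m/2$, so $\sum_{j\neq0}\nu(x-y-mj)\le\mathcal A_{d,-\alpha}\,m^{-d-\alpha}\sum_{j\neq0}(|j|_\infty-\tfrac12)^{-d-\alpha}=O(m^{-d-\alpha})$, whence $E_m\le O(m^{-d-\alpha})\,\|\varphi_m\|_1^2\le O(m^{-d-\alpha})\,|U_m|\to0$, contradicting $E_m\ge\varepsilon$. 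So the whole difficulty is concentrated in this localisation---equivalently, in ruling out that some $U\subset\mathcal T_m$ beats the $\R^d$ variational problem by exploiting the torus topology (wrapping around, or dispersing into many pieces). I would establish it from the a priori bounds $|U_m|\le A/\nu$, $\lambda_1^m(U_m)\le A$, $\|\varphi_m\|_\infty\le C_2$ together with a concentration argument: the fractional Sobolev inequality (with $m$-independent constant, by scaling) bounds $\|\varphi_m\|_{L^{2d/(d-\alpha)}}$; this with $\|\varphi_m\|_\infty\le C_2$ and $\|\varphi_m\|_1\ge\|\varphi_m\|_2^2/\|\varphi_m\|_\infty$ rules out vanishing of $\varphi_m$, dichotomy is excluded because a ground state is supported on one component and a connected set of bounded volume and bounded principal eigenvalue has bounded diameter (standard for $\alpha=2$; for $\alpha\in(0,2)$ one uses in addition that $\varphi_m^2$ is a probability density of bounded $L^\infty$-norm, forcing bounded support). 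For $\alpha=2$ the whole argument collapses: there are no jumps, $E_m\equiv0$ for $U_m$ inside a fundamental domain, and only the (then classical) localisation is needed. Combining the two parts gives $\sup_m\inf_{U\in\mathcal U_m}[\lambda_1^m(U)+\nu|U|]\ge\liminf_m\inf_{U\in\mathcal U_m}[\cdots]\ge A=C_{d,\alpha}\nu^{\frac{\alpha}{d+\alpha}}$, which is \eqref{eq:sup}.
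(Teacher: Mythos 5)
Your first part (the identity $\inf_{U\in\mathcal U}[\lambda_1(U)+\nu|U|]=C_{d,\alpha}\nu^{\alpha/(d+\alpha)}$ via Faber--Krahn, scaling, and one-variable minimisation over the radius) is correct and is exactly what the paper does. Your algebraic identity $\mathcal E_{\R^d}(\widetilde\varphi_m,\widetilde\varphi_m)=\mathcal E^m(\varphi_m,\varphi_m)+E_m$ with $E_m\ge0$ is also correct. But the second half of your argument has a genuine gap: everything hinges on the localisation statement ``after a torus translation $\supp\varphi_m$ lies in a set of diameter $<m/2$,'' and this is neither proved nor provable by the route you sketch. The assertion that a connected open set of bounded volume and bounded principal eigenvalue has bounded diameter is false (attach to a unit ball a tube of length $L$ and cross-section $\epsilon(L)$: the set stays connected, its volume and eigenvalue stay bounded, its diameter is $L$), and ``$\varphi_m^2$ is a probability density with bounded $L^\infty$-norm, hence of bounded support'' is also false. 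Moreover, for $\alpha\in(0,2)$ the killed process jumps between components of $U_m$, so the ground state is positive on \emph{all} of $U_m$ and you cannot reduce to a connected component; near-minimising $U_m$ can perfectly well be a ball plus a thin dust spread over the whole torus. Without localisation your contradiction never materialises: if $U_m$ wraps around the torus, the sharp zero-extension $\widetilde\varphi_m$ of a function not vanishing on a face of $F$ fails to lie in $H^{\alpha/2}(\R^d)$ when $\alpha\ge1$, so $E_m$ can even be $+\infty$, and in general $E_m$ need not tend to $0$.

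The paper sidesteps this obstacle entirely and constructs the comparison set $V$ directly rather than by contradiction. One extends $\varphi$ \emph{periodically} (not by zero), uses an averaging argument over translations to find $y_0$ such that all but $2d/\sqrt m$ of the $L^2$-mass of $\widetilde\varphi$ sits in the bulk $[\sqrt m,m-\sqrt m]^d$ of the translated box, and multiplies by a smooth cutoff $H_m(\cdot-y_0)$ with $\|\nabla H_m\|_\infty\le 2\sqrt{d/m}$. The resulting test function $\psi$ is compactly supported inside one box, so no wrap-around term ever appears; Minkowski's inequality bounds $\mathcal E(\psi,\psi)$ by $\bigl(\sqrt{\lambda_1^m(U)}+\sqrt{s_m}\bigr)^2+2s_m$ with $s_m=\sup_x\int(H_m(x)-H_m(z))^2\nu(x,z)\,{\rm d}z=O(m^{-\alpha/2})\to0$, and the slight loss of normalisation costs only a factor $(1+2d/(\sqrt m-2d))^2$. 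This yields $\lambda_1(V)+\nu|V|\le(1+\delta)(\lambda_1^m(U)+\nu|U|)+\delta$ uniformly in $U\in\mathcal U_m$ with \emph{no} information about where the ground state concentrates, which is precisely the input your approach is missing. If you want to keep your scheme, you would have to replace the sharp zero-extension by such a cutoff of the periodic extension at a well-chosen translate -- at which point you have reproduced the paper's proof.
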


\begin{proof}
Case $\alpha=2$ is covered by \cite[Lemma 3.3]{bib:Szn1}, so that we assume $\alpha<2.$  We combine the lines of the proofs of  this lemma and of \cite[Lemma 3.5]{bib:Don-Var}.
We will show that for any $\delta\in(0,1),$ there is an $m$ such that for any $U\in\mathcal U_m$ there exists $V\in\mathcal U$ such that
\begin{equation}\label{eq:app1} {\lambda_1(V)+\nu|V|} \leq (1+\delta) \left(\lambda_1^{m} (U)+\nu|U|\right)+\delta.
\end{equation}
This will do, as then
\begin{eqnarray*}
	\inf_{V\in\mathcal U} [{\lambda_1(V)+\nu|V|}]& \leq& (1+\delta) \left[\inf_{U\in\mathcal U_m}[ \lambda_1^m (U)+\nu|U|]\right]+\delta\\
	&\leq & (1+\delta) \sup_m\left[\inf_{U\in\mathcal U_m}[ \lambda_1^m (U)+\nu|U|]\right]+\delta,
\end{eqnarray*}
and passing to the limit $\delta\to 0$ gives the statement.

To determine the actual value of the constant on the right-hand side of \eqref{eq:sup}, one first notices that for the isotropic $\alpha-$stable process it is enough to consider balls, not arbitrary open subsets of $\R^d$
(this follows from the Faber-Krahn inequality, see e.g. \cite[Lemma 3.13]{bib:Don-Var}), use scaling of the principal eigenvalue, and minimize over the radius of balls.

To prove \eqref{eq:app1},
we will use the variational expression for the principal Dirichlet eigenvalue given by  \eqref{eq:def=lambdaMU}.
For $m$ and $U\in\mathcal U_m,$ fixed for the moment,
pick $\varphi$  in \eqref{eq:def=lambdaMU} for which \[\mathcal E^{m}(\varphi,\varphi) =  \lambda_1^{m} (U)\]
(the ground state eigenfunction).

 Since the torus $\mathcal T_m$ is identified with a box $[0,m)^d$, it makes sense to consider the counterpart of $\varphi$ defined on this box and then extended periodically to the whole $\R^d$. Throughout we denote this extension by $\widetilde \varphi$.
As in \cite{bib:Szn1} we can find a point $y_0\in [0,m)^d$ for  which
\begin{align} \label{eq:y0_est}
  \int_{[0,m)^d} \widetilde\varphi^2(x) \mathbf 1_{[\sqrt m, m-\sqrt m]^d}(x-y_0) \,{\rm d}x \geq 1-\frac{2d}{\sqrt m}.
	\end{align}
Indeed, this assertion follows from the estimate
\begin{align*}
\frac{1}{m^d} \int_{[0,m)^d} & \left(\int_{[0,m)^d}\widetilde\varphi^2(x)\mathbf 1_{[0,m)^d\setminus [\sqrt m, m-\sqrt m]^d}(x-y)\,{\rm d}x\right){\rm d}y \\ & = \frac{|[0,m)^d\setminus [\sqrt m,m-\sqrt m]^d|}{m^d}\int_{[0,m)^d}\widetilde\varphi^2(x)\,{\rm d}x\leq\frac{2d}{\sqrt m}.
\end{align*} \normal
Let now $V=\pi_m^{-1}(U) \cap([0,m)^d)+y_0) \subset \R^d.$ We have  $|V|=|U|.$
Introduce the function $h_m:\mathbb R\to [0,1]$ that is equal to $0$ for $x<0$ or $x>m,$ equal to 1 for $x\in(\sqrt m, m-\sqrt m),$ increases from 0 to 1 on $(0,\sqrt m),$ decreases from 1 to 0 on $(m-\sqrt m,m)$, is of class $C^1,$ and $\sup_{t\in \mathbb R} |h_m'(t)|\leq \frac{2}{\sqrt m}.$  Then for $x\in \R^d$ let $$H_m(x)= h_m(x_1)\cdots h_m(x_d).$$
Observe that
\begin{equation}\label{eq:grad-of-Hm}
\rho_m:=\sup_{x\in\R^d} \|\nabla H_m(x)\| \leq 2\sqrt{\frac{d}{m}}.
\end{equation}
 Define
$\psi(x) = \widetilde \varphi(x)H_m(x-y_0).$  This function is supported in $V$ and by \eqref{eq:y0_est} it satisfies
$$
1\geq  \int_{\R^d}  \psi^2(x) {\rm d}x \geq \int_{[0,m)^d} \widetilde\varphi^2(x) \mathbf 1_{[\sqrt m, m-\sqrt m]^d}(x-y_0) \,{\rm d}x \geq 1-\frac{2d}{\sqrt m}.
$$

We now evaluate $\mathcal E(\psi,\psi),$ i.e. the value of the Dirichlet form of the $\alpha-$stable process on $\R^d$ for the function $\psi.$  We do not check separately that $\psi\in \mathcal D(\mathcal E)$ (the domain of the Dirichlet form of the $\alpha-$stable process on $\R^d),$ it will become clear from the  arguments that follow.
From the symmetry of $\nu$ we have:
\begin{eqnarray*}
	\mathcal E(\psi,\psi) & = & \int_{\R^d} \int_{\R^d} (\psi(x)-\psi(z))^2\nu(x,z)\,{\rm d}x{\rm d}z\\
	&= & \int_{[0,m)^d+y_0} \int_{[0,m)^d+y_0} (\psi(x)-\psi(z))^2\nu(x,z)\,{\rm d}x{\rm d}z\\
 && +2 \int_{[0,m)^d+y_0} \int_{([0,m)^d+y_0)^c} \psi(z)^2\nu(x,z){\rm d}x{\rm d}z =: A_m[\psi] +2B_m[\psi].
\end{eqnarray*}	
Using the Minkowski inequality we get
\begin{eqnarray}\label{eq:est-a}
	A_m[\psi]&=&
	\int_{[0,m)^d+y_0} \int_{[0,m)^d+y_0} (\psi(x)-\psi(z))^2\nu(x,z)\,{\rm d}x{\rm d}z
	\nonumber\\
	&\leq & \left(\sqrt{\int_{[0,m)^d+y_0}\int_{[0,m)^d+y_0} H_m(x-y_0)^2 (\widetilde\varphi(x)-\widetilde\varphi(z))^2 \nu(x,z) {\rm d}x {\rm d}z}\right.\nonumber\\
	&&\left. + \sqrt{   \int_{[0,m)^d+y_0}\int_{[0,m)^d+y_0} \widetilde\varphi^2(z)(H_m(x-y_0)-H_m(z-y_0))^2 \nu(x,z) {\rm d}x {\rm d}z}\right)^{2}
\end{eqnarray}	
Since $|H_m(x)|\leq 1 $  for $x\in[0,m)^d$ and $H_m(x)=0$ for $x$ outside of this box, the first of the integrals  is bounded from the above by
\begin{eqnarray*}
&&\int_{[0,m)^d+y_0}\int_{\mathbb R^d}(\widetilde\varphi(x)-\widetilde\varphi(z))^2\nu(x,z){\rm d}z{\rm d}x = \int_{[0,m)^d}\int_{\mathbb R^d}(\widetilde\varphi(x)-\widetilde\varphi(z))^2\nu(x,z){\rm d}z{\rm d}x.
\end{eqnarray*}
The latter equality is a consequence of the periodicity of $\widetilde \varphi$ and the translation invariance on $\nu$. Further, by using the definition of $\nu_m$ and the periodicity of $\widetilde \varphi$ again, the last integral above is equal to
\begin{eqnarray*}
&&\int_{[0,m)^d} \sum_{\mathbf i\in\mathbb Z^d}\int_{[0,d)^m+m\mathbf i}(\widetilde\varphi(x)-\widetilde\varphi(z))^2\nu(x,z){\rm d}z{\rm d}x \\
&=& \int_{[0,m)^d}\int_{[0,m)^d}
(\widetilde\varphi(x)-\widetilde\varphi(z))\sum_{\mathbf i\in\mathbb Z^d} \nu(x, z+m\mathbf i){\rm d}z{\rm d}x
\\
&=&
\int_{[0,m)^d}\int_{[0,m)^d}  (\widetilde\varphi(x)-\widetilde\varphi(z))^2\nu_m(x,z){\rm d}z {\rm d}x\\
&=& \int_{\mathcal T_m}\int_{\mathcal T_m}(\varphi(x)-\varphi(z))^2 \nu_m(x,z){\rm d}x{\rm d}z = \mathcal E^{(m)}(\varphi,\varphi)  .
\end{eqnarray*}

We estimate  the other integral in \eqref{eq:est-a}  by
\begin{eqnarray} \label{eq:sup-1}
&&\int_{[0,m)^d} \widetilde\varphi^2(z'+y) \left(\sup_{z'\in [0,m)^d}\int_{[0,m)^d} (H_m(x')-H_m(z'))^2\nu(x',z'){\rm d}x'\right){\rm d}z' \nonumber \\
&\leq& \sup_{z'\in \R^d}\int_{\R^d} (H_m(x')-H_m(z'))^2\nu(x',z'){\rm d}x'  =: s_m,
\end{eqnarray}
because  $\int_{[0,m)^d} \widetilde \varphi^2(x){\rm d}x =1$.
These estimates add up to
\[
\mathcal A_m[\psi] \leq \left(\sqrt{\mathcal E^{(m)}(\varphi,\varphi)} + \sqrt{s_m}\right)^2.
\]
We now estimate $B_m[\psi].$ Again, since $H_m$ is supported in $(0,m)^d,$ we can write:
\begin{eqnarray*}
B_m[\psi]&=& \int_{[0,m)^d+y_0} \int_{([0,m)^d+y_0)^c} H_m^2(z-y_0)\widetilde\varphi(z)^2\nu(x,z){\rm d}x{\rm d}z\\
&=& \int_{[0,m)^d}\int_{([0,m)^d)^c} H_m^2(z)\widetilde\varphi(z+y_0)^2 \nu(x,z)\,{\rm d}x{\rm d}z\\
&=& \int_{[0,m)^d}\int_{([0,m)^d)^c} (H_m(z)-H_m(x))^2\widetilde\varphi(z+y_0)^2 \nu(x,z)\,{\rm d}x{\rm d}z\\
&\leq& \int_{[0,m)^d}\widetilde\varphi^2(z+y_0)\int_{\R^d} (H_m(z)-H_m(x))^2\nu(x,z){\rm d}x {\rm d}z \leq s_m.
\end{eqnarray*}
Altogether,
$$\mathcal E(\psi,\psi) \leq   \left(\sqrt{\mathcal E^{(m)}(\varphi,\varphi)} + \sqrt{s_m}\right)^2 +2s_m.$$

The supremum $s_m$ can be estimated as follows.  By the fact that $\nu(x,z) = \cA_{d,-\alpha}|z-x|^{-d-\alpha}$, $x \neq z$,  for any $N>1$ we have
\begin{eqnarray*} &&\sup_{x\in\R^d}	\int_{\R^d} (H_m(x)-H_m(z))^2\nu(x,z){\rm d}x
	\\
	&\leq& \sup_{x\in\R^d}
	\left(\int_{|z-x|\leq N} |H_m(x)-H_m(z)|^2 \nu(x,z){\rm d}z + \int_{|z-y|\geq N} \nu(x,z){\rm d}z\right)\\
	&\leq &  \sup\|\nabla H_m(x)\|^2 \cA_{d,-\alpha} \int_{|z|\leq N} |z|^{-d-\alpha+2}{\rm d}z + \cA_{d,-\alpha} \int_{|z|>N} |z|^{-d-\alpha}{\rm d}z \\
&\leq &  \rho_m^2 \cA_{d,-\alpha} N^{2-\alpha} +  \cA_{d,-\alpha} N^{-\alpha} .
\end{eqnarray*}
Therefore, if $m$ is sufficiently large, $s_m$ can be made arbitrarily small.
It follows:
\begin{eqnarray*}
	\lambda_1(V)&\leq&  \mathcal E\left(\frac{\psi}{\|\psi\|_2},\frac{\psi}{\|\psi\|_2}\right) \leq \left(\frac{\sqrt m}{\sqrt m-2d}\right)^2  \left(\sqrt {\lambda_1^m(U)} +\sqrt{s_m}\right)^2 +2s_m.
\end{eqnarray*}

Using the inequality
\[(a+b)^2\leq (1+\eta^2)a^2+ (1+\eta^{-2})b^2, \quad a,b\in\mathbb R, \ \  \eta>0\]
we further have, for any $\eta>0,$ and $m>m_0$ $(m_0$ is so large that $(1+\frac{2d}{\sqrt m -2d})^2\leq 2$)
\begin{eqnarray*}
\lambda_1(V)&\leq & \left(1+\frac{2d}{\sqrt m-2d}\right)^2(1+\eta^2)\lambda^m_1(U)+\left(4+\eta^{-2}\right)s_m
\end{eqnarray*}
For a given $\delta\in(0,1),$ we now take $\eta=\sqrt \delta/2,$  and $m$ so large that simultaneously\linebreak $(1+\frac{2d}{\sqrt m -2d})^2(1+\frac{\delta}{4})\leq(1+\delta)$ and $(4+\frac{4}{\delta^2})s_m\leq\delta.$
For  $m$ this big, we have
\[{\lambda_1(V)} \leq (1+\delta)   {\lambda_1 ^{m}(U)}+\delta,\]
and since $|U|= |V|,$ this gives
\[ {\lambda_1(V)+\nu|V|} \leq (1+\delta) \left(\lambda_1^{m} (U)+\nu|U|\right)+\delta.\]
\normal
\end{proof}

\section{Proof of Theorem \ref{th:dolne}}\label{sec:lower-proof}

 For a ball $B_r,$    let
\[\mathcal A_0 := \{\omega: q_i(\omega)=0 \mbox{ for all } i\in B_r^{2a}\},\]
where $a$ is the range of the  profile $W$  and $B_r^\rho:= \{x\in\R^d: \mbox{dist}\,(x,B_r)<\rho\}= B_{r+\rho}.$ As the random variables $q_{\mathbf i}$ are independent, the probability of this event is given by:
\[\mathbb Q[\mathcal A_0]=(\mathbb Q[q=0])^{\#\{i\in B^{2a}_r\}}
= {\rm e}^{-(1+\epsilon_r)|B_r|\ln(\frac{1}{\mathbb Q[q=0]})},\]
with $\epsilon_r=o(1)$ as $r\to\infty.$

By Corollary \ref{coro-el-t-properties} (ii), we have
\begin{eqnarray*}
	L(t) &=& (1+o(1))\frac{1}{|B_r|}\int_{B_r} p(t,x,x) \mathbb E^{\mathbb Q} \mathbf E_{x,x}^t\left[{\rm e}^{-\int_0^t V^\omega(Z_s){\rm d}s}   \right]\,{\rm d}x
\\
&\geq &  (1+o(1))	\frac{1}{|B_r|}\int_{B_r} p(t,x,x) \mathbb E^{\mathbb Q} \mathbf E_{x,x}^t\left[{\rm e}^{-\int_0^t V^\omega(Z_s){\rm d}s}; A_0, \tau_{B_r}>t   \right]\,{\rm d}x.
\end{eqnarray*}
On the event $A_0\cap\{\tau_{B_r}>t\}$ we have that
\(
\int_0^t V^\omega(Z_s)\,{\rm d}s=0,\) so that
our estimate continues as
\begin{eqnarray}\label{eq:up-to-here}
L(t)	&\geq&(1+o(1))\mathbb Q[\mathcal A_0]\cdot \frac{1}{|B_r|}\int_{B_r} p (t,x,x) \mathbf P^t_{x,x}[\tau_{B_r}>t]{\rm d}x \nonumber \\
	&=& \frac{(1+o(1))}{|B_r|}\,{\rm e}^{ - (1+\epsilon_r)|B_r|\ln\frac{1}{\mathbb Q[q=0]}} \mbox{Tr}\, T_t^{B_r} \nonumber\\
	&\geq& \frac{(1+o(1))}{|B_r|}\,{\rm e}^{- (1+\epsilon_r)|B_r|\ln\frac{1}{\mathbb Q[q=0]}} {\rm e} ^{-t \lambda_1 (B_r)},\quad r\to\infty.
\end{eqnarray}
Moreover, for any ball $B_r$ we have the scaling
\begin{equation}\label{eq:scal}
|B_r|=r^d\omega_d \quad \mbox{ and } \qquad   \lambda_1(B_r)= \frac{1}{r^\alpha}\lambda_d^{(\alpha)},
\end{equation}
where $\omega_d$ is the volume of the   unit ball in $\R^d$ and $\lambda_d^{(\alpha)}$ is the corresponding  principal eigenvalue, so that for sufficiently large $r$ we have
\[
L(t)\geq \frac{(1+o(1))}{r^d\omega_d} {\rm e}^{-\left[(1+\epsilon_r)(\omega_d r^d\ln \frac{1}{\mathbb Q[q=0]})+ \frac{t}{r^\alpha}\,\lambda_d^{(\alpha)}
\right]}
\]
We choose   \[r=r_t=\left(\frac{\alpha \lambda_d^{(\alpha)}}{\ln\frac{1}{\mathbb Q[q=0]}d\omega_d}\,t\right)^\frac{1}{d+\alpha}.\]
Clearly,  we have $r_t\to\infty$ when $t\to\infty.$

With this substitution we obtain
\[L(t)\geq \frac{(1+o(t))}{a(t)} {\rm e}^{ -t^{\frac{d}{d+\alpha}}\left(\ln \frac{1}{\mathbb Q[q=0]}\right)^{\frac{\alpha}{d+\alpha}}
\omega_d^{\frac{\alpha}{d+\alpha}} (\lambda_d^{(\alpha)})^{\frac{d}{d+\alpha}}
\left(
(1+\epsilon_{r_t})(\alpha/d)^{\frac{d}{d+\alpha}}+(d/\alpha)^{\frac{\alpha}{d+\alpha}}\right)}
\]
($a(t)$ is polynomial in $t,$ the exact formula is not necessary),
and further, taking into account the property that $\epsilon_{r_t}\to 0,$
\begin{eqnarray*}
\liminf_{t\to\infty}
\frac{\log L(t)}{t^{\frac{d}{d+\alpha}}\left(\ln\frac{1}{\mathbb Q[q=0]}\right)^{\frac{\alpha}{d+\alpha}}}
&\geq& - \omega_d^{\frac{\alpha}{d+\alpha}}
(\lambda_d^{(\alpha)})^{\frac{d}{d+\alpha}}\left((\alpha/d)^{\frac{d}{d+\alpha}} +(d/\alpha)^{\frac{\alpha}{d+\alpha}}\right)\\
&=& -\omega_d^{\frac{\alpha}{d+\alpha}}\left(\frac{d+\alpha}{\alpha}\right)
\left(\frac{\alpha \lambda_d^{(\alpha)}}{d}\right)^{\frac{d}{d+\alpha}} = -C_{d,\alpha}.
\end{eqnarray*}
This concludes the proof.

\end{document}